\tikzset{diagram/.style={matrix of math nodes, inner sep=0pt, row
    sep=#1, column sep=2.5em, text height=1.5ex, text depth=.25ex,
    nodes={inner sep=1ex}}}
\tikzset{diagram/.default=2.5em}
\newcommand\diagram{\path node[diagram]}
\newtheorem{thm}{Theorem}
\newtheorem{prop}[thm]{Proposition}
\newtheorem{lemma}[thm]{Lemma}
\newtheorem{cor}[thm]{Corollary}
\theoremstyle{definition}
\newtheorem{defn}{Definition}
\newtheorem{rmk}{Remark}
\newtheorem*{related works}{Related Works}
\theoremstyle{definition}
\newtheorem{question}{Question}
\numberwithin{equation}{section}
\newcommand{\RR}{\CC[t,t^{-1}]} 
\newcommand{\RRR}{\QQ[t,t^{-1}]} 
\newcommand{\CC}{\mathbb{C}}
\newcommand{\BB}{\mathbb{B}_n} 
\newcommand{\QQ}{\mathbb{Q}}
\newcommand{\ZZ}{\mathbb{Z}}
\newcommand{\ZZZ}{\ZZ[t,t^{-1}]} 
\newcommand{\D}{\mathcal{G}_n}
\newcommand{\s}{\sigma} 
\newcommand{\e}{\epsilon} 
\newcommand{\g}{\gamma} 
\newcommand{\PP}{\conf(\fq)} 
\newcommand{\A}{\mathbb{A}} 
\newcommand{\fq}{\mathbb{F}_q}
\newcommand{\ff}{\mathbb{F}}
\newcommand{\cn}{\text{Conf}_{n}(\CC)} 
\newcommand{\nn}{\Pn} 
\newcommand{\conf}{\text{Conf}_{n}} 
\newcommand{\Pn}{\mathrm{Conf}_n}
\begin{document}

\nocite{*}

\title{Homology of braid groups, the Burau representation, and points on superelliptic curves over finite fields}

\author{Weiyan Chen}

\date{\today}

\maketitle

\begin{abstract}
The (reduced) Burau representation $V_n$ of the braid group $B_n$ is obtained from the action of $B_n$ on the homology of an infinite cyclic cover of the disc with $n$ punctures. In this paper, we calculate $H_*(B_n;V_n)$. 
Our topological calculation has the following arithmetic interpretation (which also has different algebraic proofs): the expected number of points on a random superelliptic curve of a fixed genus over $\fq$ is $q$.
\end{abstract}


\section{Introduction}

Every configuration of $n$ distinct points $P=\{p_1,\cdots,p_n\}$ in $\CC$ determines a smooth \emph{superelliptic curve} over $\CC$:
\begin{equation}
\label{curve equation}
X_{P}\ :\ y^d=(x-p_1)\cdots (x-p_n)
\end{equation}
This construction has been studied by many people. See for example McMullen \cite{McMullen}. 
To study how the construction (\ref{curve equation}) varies in family, we compute the homology of braid groups with coefficients in the (complexified and reduced) Burau representation in Theorem \ref{braid} below.  Our result has an arithmetic interpretation via  \'etale cohomology theory. Namely, it implies that the expected number of points on a random superelliptic curve of a fixed genus over $\fq$ is $q$. 

\paragraph{The topological problem.}

Consider the variety $\conf:=\{(a_1,\cdots,a_n)\in \A^n\ :\ \Delta\ne0\}$ where $\Delta=\Delta(a_1,\cdots,a_n)$ stands for the discriminant of the polynomial $x^n+a_1x^{n-1}+\cdots+a_{n-1}x+a_n$. $\conf$ is the variety of monic, square-free polynomials of degree $n$. Its $\CC$-points, denoted by $\conf(\CC)$, parametrize configurations of $n$ distinct unlabeled points on $\CC$, because each polynomial in $\conf(\CC)$ gives $n$ distinct roots.  The affine curve $X_P$ in (\ref{curve equation}) associated to any configuration $P\in\cn$ gives rise to a fiber bundle over $\cn$:
\begin{equation}
\label{topo bundle}
\begin{tikzpicture}
\diagram (m)
{X_{P} & \mathrm{E}_{n,d}(\CC)\\
  \ &\cn \\};
\path [->] (m-1-1) edge node [above] {} (m-1-2)
           (m-1-2) edge node [right] {$\pi$} (m-2-2);
\end{tikzpicture}
\end{equation}
where $\mathrm{E}_{n,d}$ is the total space and $\pi$ is the bundle projection. The topological problem is to understand the map on cohomology groups induced by the fibration. To understand the fibration, we should study the monodromy representation. By the work of Fadell-Neuwirth \cite{FN}, $\cn$ is a $\mathrm{K}(B_n,1)$ where $B_n$ is the braid group on $n$ strands. The monodromy representation
\begin{equation}
\label{monodromy Burau}
B_n\cong \pi_1(\cn,P)\longrightarrow \mathrm{Aut}(H^1(X_P;\CC))
\end{equation}
decomposes into a direct sum of subrepresentations, each of which can be obtained from the \emph{reduced Burau representation}
\begin{equation}
\label{rho}
\rho: B_n\longrightarrow GL_{n-1}\ZZZ.
\end{equation}
by replacing the formal variable $t$ by a $d$-th root of unity. The precise definition of $\rho$ and its connection with the monodromy (\ref{monodromy Burau}) will be given in Section \ref{topo}. Let the \emph{reduced Burau module}  be $V_n:=\RR^{n-1}$ on which $B_n$ acts by $\rho$ in (\ref{rho}). The study of the fibration (\ref{topo bundle})  thus motivates us to calculate $H_*(B_n;V_n)$, which we answer by the following theorem:
\begin{thm}
\label{braid}
If  $n>2$, then
\[
H_k(B_n;V_n)\cong
  \begin{cases}
   0 \ \ \ &  k=0 \\
   \RR/(1-t) \ \ \ &  0<k<n-2 \\
   \RR/(1-t) \ \ \ & k=n-2,\text{and $n$ is odd} \\
   \RR/(1-t^2) \ \ \ & k=n-2,\text{and $n$ is even} \\
    0\ \ \ &k\geq n-1
  \end{cases}
\]
\end{thm}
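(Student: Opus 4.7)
I would realize $V_n$ topologically, use a Serre spectral sequence to relate $H_*(B_n;V_n)$ to the ordinary homology of a concrete space $E$, and then compute $H_*(E;\CC)$ independently.

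First, identify $V_n$ with $H_1(\tilde D_n;\CC)$, where $\tilde D_n\to D_n$ is the infinite cyclic cover of the $n$-punctured disk corresponding to the abelianization $\pi_1(D_n)=F_n\twoheadrightarrow\ZZ$ (each standard meridian mapping to $1$). A direct cellular calculation gives $H_0(\tilde D_n;\CC)=\RR/(1-t)$, $H_1(\tilde D_n;\CC)=V_n$, and $H_q=0$ for $q\geq 2$. The Fadell--Neuwirth bundle $D_n\to\mathrm{Conf}_n^{(1)}(\CC)\to\cn$ admits a natural fiberwise $\ZZ$-cover $\tilde D_n\to E\to\cn$ with total space
\[
E=\bigl\{(f,z,w)\in\cn\times\CC\times\CC : e^w=f(z)\bigr\}.
\]
The Serre spectral sequence of this fibration has $E^2_{p,0}=H_p(B_n;\CC)$ and $E^2_{p,1}=H_p(B_n;V_n)$, with no other nonzero terms. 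Since $H_p(B_n;\CC)=0$ for $p\geq 2$ (Arnold), the only potentially nonzero differential $d_2:E^2_{p,0}\to E^2_{p-2,1}$ departs from a zero group; the spectral sequence collapses, yielding an $\RR$-module filtration on $H_m(E;\CC)$ with associated graded
\[
H_m(B_n;\CC)\oplus H_{m-1}(B_n;V_n).
\]

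Second, compute $H_*(E;\CC)$ by a second route. The projection $E\to\CC$, $(f,z,w)\mapsto w$, is a fiber bundle over a contractible base, so $E$ is homotopy equivalent to a single fiber; via the change of variable $h(x)=(f(x+z)-e^w)/x$ and after dropping the contractible $z$-factor, this fiber is identified with the affine discriminant complement
\[
Z_c=\{(a_1,\ldots,a_{n-1})\in\A^{n-1}:\Delta(a_1,\ldots,a_{n-1},c)\neq 0\},\qquad c\in\CC\setminus\{0\}.
\]
Under this identification the deck $\ZZ$-action on $E$ becomes the $\mu_n$-action on $Z_c$ by simultaneous root scaling, so the $\RR$-action on $H_*(Z_c;\CC)\cong H_*(E;\CC)$ factors through $\RR/(t^n-1)$ and splits into $\mu_n$-character eigenspaces.

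Third, determine these eigenspaces. The space $\mathrm{Conf}_n(\CC\setminus\{0\})=\cn\setminus\{a_n=0\}$ is itself a fiber bundle over $\CC\setminus\{0\}$ via $f\mapsto f(0)$, with fiber $Z_c$ and monodromy generating $\mu_n$; its homology is accessible through the Fadell--Neuwirth fibration for $\mathrm{Conf}_n^{(1)}(\CC)\simeq\mathrm{Conf}_n(\CC\setminus\{0\})$, or via a Gysin sequence for the divisor $\cn\setminus\mathrm{Conf}_n(\CC\setminus\{0\})\cong\mathrm{Conf}_{n-1}(\CC\setminus\{0\})$ combined with Arnold's formulas. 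The Wang sequence of this bundle extracts the $\mu_n$-invariants of $H_*(Z_c;\CC)$; to see the remaining eigenvalues I would pass to the $n$-fold cyclic cover $\widetilde{\mathrm{Conf}_n(\CC\setminus\{0\})}^{(n)}=\{(f,y):y^n=f(0)\}$, whose monodromy over the $y$-line is trivial, giving $\widetilde{\mathrm{Conf}_n(\CC\setminus\{0\})}^{(n)}\simeq Z_c\times(\CC\setminus\{0\})$. The $\mu_n$-deck action then makes the scaling action on $Z_c$ explicit, and a character-sum computation reads off every eigenspace. Comparing the result to the filtration from the first step (and using Arnold's $H_*(B_n;\CC)=\CC$ in degrees $0,1$ only) extracts $H_*(B_n;V_n)$; in particular it pins down $H_0(B_n;V_n)=0$, which resolves the only potential extension issue (in degree $m=1$).

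The main obstacle will be this final step: showing that the only $\mu_n$-eigenvalues appearing in $H_*(Z_c;\CC)$ are $+1$ (in every degree $0,\ldots,n-1$) and $-1$ (in the top degree $n-1$, and only when $n$ is even). The $-1$ contribution is a parity phenomenon, since $-1\in\mu_n$ if and only if $n$ is even, and this is precisely what produces the exceptional $\RR/(1-t^2)$ in the theorem. Tracking this eigenvalue --- rather than merely the invariants --- through the interlocking spectral sequences is the delicate part of the argument.
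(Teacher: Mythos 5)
Your first half is sound, and it is in fact the paper's own reduction phrased space-level rather than group-level: your total space $\mathrm{Conf}_n^{(1)}(\CC)=\{(f,z):f\in\cn,\ f(z)\neq0\}$ is a $K(\D,1)$ for the type-$B$ Artin group $\D\cong F_n\rtimes_\phi B_n$, your fiberwise cover $E$ is the cover attached to the kernel of the total-winding-number character $\D\to\ZZ$, so $H_*(E;\CC)\cong H_*(\D;\RR)$ by Shapiro's lemma, and your Serre spectral sequence is exactly the Hochschild--Serre spectral sequence of $1\to F_n\to\D\to B_n\to1$ that the paper runs, collapsing for the same reason (Arnol'd). The extension issue is most cleanly dismissed by noting that all terms are torsion $\RR$-modules on which $t$ acts through a finite cyclic group, so the relevant category is semisimple; your direct verification that the coinvariants $H_0(B_n;V_n)$ vanish also works.

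The genuine gap is in the second half, which is precisely the step where the paper instead imports Theorem 4.2 of Callegaro--Moroni--Salvetti. You need $H_*(Z_c;\CC)$ as a $\mu_n$-representation, \emph{every} character eigenspace in \emph{every} degree, but the tools you name only see the trivial character. The Wang sequence of $Z_c\to\mathrm{Conf}_n(\cm)\to\cm$ computes only the monodromy (co)invariants $H_*(Z_c)^{\mu_n}$, and passing to the $n$-fold cover $\{(f,y):y^n=f(0)\}\cong Z_c\times(\cm)$ is circular: under that trivialization the deck group acts on the $\cm$-factor homotopically trivially, so $H_*$ of the cover as a $\mu_n$-module is $H_*(Z_c)\otimes H_*(\cm)$ with $\mu_n$ acting through the first factor --- describing it is literally the problem you started with, and the quotient only returns the invariants again. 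The proposed ``character-sum computation'' $\tfrac1n\sum_{\zeta}\overline{\chi(\zeta)}\,\mathrm{tr}(\zeta^*\mid H_k(Z_c))$ needs the traces of each $\zeta^*$ degree by degree, which nothing in your outline produces; equivariant Euler characteristics (from fixed-point formulas or point counts) would only give the alternating sum over $k$ and cannot isolate the key assertion that the $(-1)$-eigenvalue occurs \emph{only} in the top degree $n-1$. What is actually required is $H_*(\D;\RR)$ itself, equivalently the homology of $\mathrm{Conf}_n(\cm)$ with coefficients in every rank-one local system pulled back from $\cm$ --- and that is the content of the Callegaro--Moroni--Salvetti computation, which they obtain from an explicit algebraic complex for the Artin group, not from covering-space arguments. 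As written, your proposal defers rather than supplies this input: either cite their theorem (at which point your argument coincides with the paper's), or replace the Wang/covering-space step with a genuine computation of the twisted homology.
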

If $n=2$, then we have $B_2\cong\ZZ$ and $V_2\cong \RR$. The generator of $B_2\cong\ZZ$ acts on $\RR$ via multiplication by $-t$. A standard computation shows that
$H_0(B_2;V_2) = \RR/(1+t)$ and $H_i(B_2;V_2) =0$ for all $i>0$. 

Theorem \ref{braid} will be proved  in Section \ref{topo} using the work of  Callegaro-Moroni-Salvetti \cite {CMS}. As a consequence of Theorem \ref{braid}, the fibration $\pi$ (\ref{topo bundle}) induces an isomorphism on all cohomology groups when either $d$ or $n$ is odd; see Proposition \ref{topo input} below.

\begin{rmk}[\textbf{Homological stability}]
From Theorem \ref{braid}, we see that for each fixed $k$, the homology group $H_k(B_n;V_n)$ will eventually be the same for all $n$ sufficiently large. Homological stability of braid groups with coefficients in the Burau representation  was proved independently by Wahl and Randal-Williams \cite{Wahl} as a consequence of a more general theorem. Theorem \ref{braid} concurs with their result, and indeed computes all the homology groups explicitly. 

The homological stability of $H_i(B_n;V_n)$ parallels a result of Church-Farb \cite{CF}. They proved that $H_k(B_n;U_n)$ stabilizes as $k$ is fixed and $n\to \infty$ when $\{U_n\}_{n=1}^\infty$ is any ``consistent" sequence of representations of $S_n$  viewed as representations of $B_n$ via the map $B_n\to S_n$ (see \cite{CF} for what ``consistent" means). The stabilization of $H_k(B_n;V_n)$ supplies a different example where the homological stability  holds  when the twisted coefficients do not factor through the quotient $B_n\to S_n$. 
\end{rmk}

The general problem of computing the homology of the braid groups $B_n$ with twisted coefficients is a topic of independent interest and has been considered by many authors. For example, $H_*(B_n;W)$ has been computed for:

\begin{enumerate}[label=(\roman*)]
\item (Cohen, \cite{Cohen}; Vassiliev \cite{V}): $W=\CC$ where $B_n$ acts on $\CC$ by the composition $B_n\to S_n\xrightarrow{\text{sign}}\{\pm1\}\subset\CC^\times$.  

\item (Cohen, \cite{Cohen}; Vassiliev \cite{V}): $W = \CC^{n-1}$ where  $B_n$ acts by composing $B_n\to S_n$ with the standard representation $S_n\curvearrowright \CC^{n-1} = \{(x_1,\cdots,x_n)\in \QQ^n\ |\ \sum x_i=0\}$.

\item (De Concini-Procesi-Salvetti \cite{CPS}): 
$W = \RR$ where $B_n$ acts by the determinant of the reduced Burau representation. 
\end{enumerate}
These three representations of $B_n$ are related to the reduced Burau representation $\rho$ by the following commutative diagram
\begin{equation*}
\begin{tikzpicture}
\diagram (m)
{ B_n &&& GL_{n-1}\RR & & & GL_{n-1}\QQ\\
  & & &\RR^\times & & & \QQ^\times\\};
\path [->] (m-1-1) edge node [above] {$\rho$} (m-1-4)
           (m-1-4) edge node [left] {det} (m-2-4)
           (m-1-4) edge node [above] {$t\mapsto1$} (m-1-7)
           (m-1-7) edge node [right] {det} (m-2-7)
           (m-2-4) edge node [above] {$t\mapsto1$} (m-2-7);
\end{tikzpicture}
\end{equation*}
Moreover, it is possible to recover (ii) for $H_*(B_n;\CC^{n-1})$ from Theorem \ref{braid} using the Universal Coefficient Theorem by setting $t=1$. This is explained in Remark \ref{Coxeter}.

\paragraph{Connection to arithmetic.}
We consider the construction (\ref{curve equation}) over a finite field $\fq$, where $q$ is a power of prime number. Every monic polynomial $f(x)\in\fq[x]$ determines an affine curve over $\fq$ 
$$X_f\ :\ y^d=f(x).$$
We will only consider when $X_f$ is a smooth curve, or equivalently, when $\gcd(q,d)=1$ and $f(x)$ is square-free, \emph{i.e.} has distinct roots in $\overline{\fq}$. In this case, $X_f$ is a smooth {superelliptic curve} over $\fq$. The $\fq$-points on $X_f$
$$X_f(\fq) = \{(x,y)\in\fq^2\ :\ y^d=f(x)\}$$ is a finite set. It is natural to ask what the average of $|X_f(\fq)|$ is as the curve $X_f$ varies among a natural family. Observe that the family of smooth superelliptic curves of a fixed genus is parametrized by the following finite set:
$$\conf(\fq):= \{f\in\fq[x]\ :\ \text{$f$ is monic, square-free, and of degree $n$}\}.$$
which contains precisely the $\fq$-points of the variety $\conf$.  
The following is an arithmetic interpretation of Theorem \ref{braid} via the \'etale cohomology theory.

\begin{cor}
\label{average pt}
Suppose that either $n$ or $d$ is an odd number. For all but finitely many primes $p$ (depending on $d$ and $n$),  for every $q$ a power of $p$, we have
\begin{equation}
\label{exp}
\frac{1}{|\conf(\fq)|}\sum_{f\in\PP} |X_f(\fq)| = q
\end{equation}
\end{cor}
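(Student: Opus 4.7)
The first step is to rewrite the sum on the left of (\ref{exp}) as a single point-count. Since the fiber of $\pi\colon \mathrm{E}_{n,d}\to\conf$ over $f\in\PP$ is exactly $X_f$, fiberwise counting gives $\sum_{f\in\PP}|X_f(\fq)| = |\mathrm{E}_{n,d}(\fq)|$, so (\ref{exp}) is equivalent to the identity $|\mathrm{E}_{n,d}(\fq)| = q\cdot |\PP|$.

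To access both sides cohomologically, I would apply the Grothendieck--Lefschetz trace formula with compact supports. The schemes $\conf$ and $\mathrm{E}_{n,d}$ are smooth over $\ZZ[1/d]$ of relative dimensions $n$ and $n+1$ respectively (smoothness of $\mathrm{E}_{n,d}$ follows from the Jacobian criterion applied to $y^d - f(x)$ once $\gcd(d,p)=1$ and $f$ is square-free). Combining the trace formula with Poincar\'e duality on each smooth variety yields, for any $\ell\ne p$,
$$|X(\fq)| \;=\; q^{\dim X}\sum_i (-1)^i\,\mathrm{tr}\bigl(\mathrm{Frob}_q^{-1}\,\bigm|\,H^i_{\text{\'et}}(X_{\overline{\fq}};\QQ_\ell)\bigr)$$
for $X = \conf$ and for $X = \mathrm{E}_{n,d}$. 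Thus it suffices to produce a Frobenius-equivariant isomorphism
$$\pi^*\colon H^i_{\text{\'et}}(\conf_{\overline{\fq}};\QQ_\ell)\xrightarrow{\sim} H^i_{\text{\'et}}\bigl((\mathrm{E}_{n,d})_{\overline{\fq}};\QQ_\ell\bigr)$$
in every degree: the Frobenius Euler sums will then coincide, and the ratio $q^{n+1}/q^n$ of the prefactors is the desired factor of $q$.

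Such an isomorphism is furnished, over $\CC$, by the consequence of Theorem \ref{braid} recorded in the excerpt as Proposition \ref{topo input}: whenever $n$ or $d$ is odd, $\pi^*$ induces an isomorphism on complex singular cohomology. Because $\pi$ is a morphism of smooth schemes defined over $\ZZ[1/d]$, I would spread it out, invoke Artin's comparison theorem to pass from Betti to $\ell$-adic \'etale cohomology over $\CC$ (hence also over $\overline{\QQ}$), and then apply smooth base change to descend to \'etale cohomology over $\overline{\fq}$ for every prime $p$ outside a fixed finite set -- namely the primes dividing $d$ together with the finitely many primes of bad reduction introduced by the spreading-out. The descended isomorphism is automatically Frobenius-equivariant because $\pi$ itself is defined over $\fq$.

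The main obstacle is exactly this last transfer: verifying that the topological statement of Proposition \ref{topo input} transports cleanly to the $\ell$-adic \'etale setting over $\overline{\fq}$ and pinning down which primes must be discarded, without needing to enumerate them. The remaining steps are a formal combination of the Grothendieck--Lefschetz trace formula and Poincar\'e duality.
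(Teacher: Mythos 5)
Your proposal is correct and follows essentially the same route as the paper: rewrite the sum as $|\mathrm{E}_{n,d}(\fq)|$, apply the Grothendieck--Lefschetz formula with compact supports plus Poincar\'e duality, and transport the isomorphism of Proposition \ref{topo input} from singular cohomology over $\CC$ to Frobenius-equivariant $\ell$-adic \'etale cohomology over $\overline{\mathbb{F}}_p$ via Artin comparison and base change, discarding finitely many primes. The only cosmetic difference is that the paper performs the descent step using base change for compactly supported cohomology together with constructibility of the derived pushforward to $\mathrm{Spec}\,\ZZ$ (rather than your appeal to smooth base change after spreading out), which is the same mechanism for the same finite exclusion of primes.
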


\begin{rmk}
Instead of affine curves, one can also consider their smooth completions and study the expected number of $\fq$-points on the smooth completions of $X_f$. 
The number of points at infinity on the smooth completion of $X_f(\fq)$ is equal to the number of $e$-th roots of unity in $\fq$ where $e=\gcd(d,n)$. 
Therefore, in this case, knowing the distribution of number of $\fq$-points on the smooth completions is equivalent to knowing it on their affine models.
\end{rmk}

The asymptotic statistics on families of varieties over $\fq$ is an active area of research. See David's notes \cite{David} for a survey. For instance, the asymptotic distribution (as genus $\to\infty$) of the number of $\fq$-points on curves determined by $y^d=f(x)$ has been studied by Kurlberg-Rudnick \cite{KR} in the case when $d=2$ (hyperelliptic curves), by Bucur-David-Feigon-Lal\'in \cite{BDFL} in the case when $d=3$ (trigonal curves), and by Xiong \cite{Xiong} and Cheong-Wood-Zaman \cite{CWZ} for general $d$ relatively prime to $q$ (superelliptic curves). In comparison to the previous work, Corollary \ref{average pt} is only concerned about the expected value as opposed to the exact distribution. On the other hand, Corollary \ref{average pt} applies to finite $n$ instead of applying only at the limit $n\to\infty$. 
The topological approach in this paper is different from the methods used in all the aforementioned papers. For a discussion on the possibility of adapting the topological method to calculate the higher moments of statistics such as the variance, please see Section \ref{higher moments}.

Even though in this paper Corollary \ref{average pt} will be proved using Theorem \ref{braid}, we remark that our topological method is not the only way to prove the arithmetic result. After Corollary \ref{average pt} was announced, Will Sawin, Frank Calegari and Bjorn Poonen [personal communications] independently found proofs of the formula (\ref{exp}) using purely algebraic method. The main point here is to explain how our topological calculation can give correct predictions and conceptual explanations for the arithmetic result.

\begin{rmk}[The necessity of hypothesis in Corollary \ref{average pt}]
%

The condition on excluding finitely many primes is not necessary. This condition is needed only in a technical step (Lemma \ref{primes}) of our proof, and can be removed via other approach. The condition that either $n$ or $d$ is odd is necessary.  In fact, the topological computation will predict an error term in equation (\ref{exp}) if both $n$ and $d$ are even. See Section \ref{even} for more discussion in this case. 
\end{rmk}

The connection between Theorem \ref{braid} (topology) and Corollary \ref{average pt} (arithmetic) is given by the following algebro-geometric consideration. 
The fiber bundle  $E_{n,d}(\CC)\to\cn$ (\ref{topo bundle}) can be taken in the algebraic category. 
 More precisely, there is a morphism $\pi:E_{n,d}\to\conf$ of algebraic varieties defined over $\ZZ$ such that the fiber bundle is just $\pi$ restricted to their complex points.  Corollary \ref{average pt} is equivalent to the following statement about their $\fq$-points:
\begin{equation}
\label{ratio}
\frac{|\mathrm{E}_{n,d}(\fq)|}{|\conf(\fq)|}=q. 
\end{equation}
To compute the ratio, we use the Grothendieck-Lefschetz fixed point formula in \'etale cohomology, which gives (after applying Poincar\'e duality):
\begin{align}
\label{GLE1}
|\mathrm{E}_{n,d}(\fq)| =q^{\dim(\mathrm{E}_{n,d})}\sum_{i\geq0} (-1)^i \mathrm{trace}(\mathrm{Frob}_q^*: H^i_{\text{\'et}}({\mathrm{E}_{n,d}}_{/\overline{\mathbb{F}}_q};\QQ_{l})^\vee)\\
\label{GLE2}|\nn(\fq)| =q^{\dim(\nn)}\sum_{i\geq0} (-1)^i \mathrm{trace}(\mathrm{Frob}_q^*: H^i_{\text{\'et}}({\nn}_{/\overline{\mathbb{F}}_q};\QQ_{l})^\vee)
\end{align}
where $\mathrm{Frob}_q$ denotes the Frobenius map. 

To prove (\ref{ratio}), our input from topology will be the following consequence of Theorem \ref{braid}: the fiber bundle map $\pi:E_{n,d}(\CC)\to\cn$ induces isomorphisms on all singular cohomology groups when either $d$ or $n$ is odd (Proposition \ref{topo input}). This implies, via general theorems in \'etale cohomology (precisely, the Artin comparison theorem, base change for compactly supported cohomology, and the constructibility of the derived pushforward, see Section \ref{etale} for details), that $\pi$ as a map of varieties induces an isomorphisms on all the \'etale cohomology groups:
$$H^i_{\text{\'et}}({\mathrm{E}_{n,d}}_{/\overline{\mathbb{F}}_q};\QQ_{l})\cong H^i_{\text{\'et}}({\conf}_{/\overline{\mathbb{F}}_q};\QQ_{l})$$
and the isomorphisms are equivariant with respect to $\mathrm{Frob}_q$. 
Taking the quotient of (\ref{GLE1}) and (\ref{GLE2}), we conclude
$$\frac{|E_{n,d}(\fq)|}{|\conf(\fq)|}=\frac{q^{\dim E_{n,d}}}{q^{\dim \conf}}=\frac{q^{n+1}}{q^n}=q
$$
hence establishing Corollary \ref{average pt}. 

This paper is very much inspired by the work of Church-Ellenberg-Farb \cite{CEF}, where they related the cohomology of the braid groups with coefficients in various representations of the symmetric groups to statistics on the roots of monic square-free polynomials, which can be viewed as smooth 0-dimensional varieties. The present paper relates the homology of the braid groups in the reduced Burau representation to statistics on families of smooth 1-dimensional varieties, namely, curves. Furthermore, the results of this paper  supply an example of a broader program applying topology to make concrete calculations for various moduli spaces over finite fields, using \'etale cohomology. See \cite {FW1} and \cite{FW2} for more examples of this program.\\

\textbf{Acknowledgments.}\ \ \ \
The author is grateful to Sasha Beilinson, Joan Birman, Fred Cohen, Frank Calegari, Jordan Ellenberg, Matt Emerton, Sean Howe, Madhav Nori, Bjorn Poonen, and Will Sawin for helpful conversations on the subject. In particular, Section \ref{counting} of this paper is inspired by conversations with Jordan Ellenberg. The author would like to thank Sean Howe, Curt McMullen,  Bjorn Poonen, and the anonymous referee for suggestions on early drafts of this paper. Finally, the author is pleased to express his deep gratitude to his advisor Benson Farb, both for his continued support on the project and for his extensive comments on the exposition.

\section{Braid groups and reduced Burau representation}
\label{topo}

In this section, we prove Theorem \ref{braid}, based on Callegaro-Moroni-Salvetti's calculation of homology of type-B Artin groups. 

\subsection{Topological interpretation of reduced Burau representation}

We first recall the definitions of the braid groups and the reduced Burau representation, emphasizing their topological interpretation. All of the results in this subsection are classical; proofs can be found in Chapter 1 and Chapter 3 of \cite{KT}.

The \emph{braid group} $B_n$ on $n$ strands has the following presentation 
\begin{align*}
\langle \s_1,\cdots,\s_{n-1}\ |\ &\s_i\s_{i+1}\s_i=\s_{i+1}\s_i\s_{i+1}& \text{for }i=1,\cdots,n-2\\
&\s_i\s_j=\s_j\s_i &\text{for }|i-j|>1\rangle
\end{align*}
The \emph{reduced Burau representation} has been classically defined  as  the group homomorphism 
$$\rho:B_n\to GL_{n-1}(\ZZZ)$$
which in the case $n>2$ sends the generator $\s_1,\cdots,\s_{n-1}$ to the following matrices
$$\rho(\s_1)=
\left( \begin{array}{cc|c}
 - t & 0 & \ \\
1 & 1 & \ \\
\hline
\ & \ & I_{n-3} \end{array} \right), 
\ \ \ \ \rho(\s_{n-1})=
\left( \begin{array}{c|cc}
 I_{n-3}& \ & \ \\
\hline
\ & 1 & t \\
\ & 0 & -t \end{array} \right),$$
and for $1<i<n-1$, 
$$
\rho(\s_i)=
\left( \begin{array}{c|ccc|c}
I_{i-2}&\ &\ &\ &\ \\
\hline
\ &1&t&0&\ \\
\ &0&-t&0&\ \\
\ &0&1&1&\ \\
\hline
\ &\ &\ &\ &I_{n-i-2}
\end{array} \right)$$
where $I_m$ stands for the $m\times m$ identity matrix. When $n=2$, we have $B_2=\langle \s\rangle\cong\ZZ$. The reduced Burau representation sends the generator $\s$ to $-t\in \ZZZ^\times= GL_1(\ZZZ)$.

There is a topological interpretation of the reduced Burau representation, which will be important for our purposes. Let $D_n$ denote the closed disc with $n$ punctures in the interior. There is an isomorphism $B_n\cong \mathrm{Mod}(D_n)$, where $\mathrm{Mod}(D_n)$ consists of homeomorphisms of $D_n$ fixing $\partial D_n$ pointwise up to isotopies fixing $\partial D_n$ pointwise. By choosing a basepoint $p\in\partial D_n$, the braid group
$B_n\cong \mathrm{Mod}(D_n)$ acts on $\pi_1(D_n,p)$. Let $\g_i\in\pi_1(D_n,p)$ be the element represented by a loop that winds once  around the $i$-th puncture counterclockwise, then $\pi_1(D_n,p)\cong F_n$, a free group of rank $n$ generated by $\g_1,\cdots,\g_n$. The action $B_n\curvearrowright \pi_1(D_n,p)$ has the following formula
\begin{equation}
\label{braid action}
\phi:B_n\longrightarrow \mathrm{Aut}(F_n)
\end{equation}
\begin{align*}
\phi(\s_i):\ \g_i&\mapsto \g_{i+1}\\
   \g_{i+1}&\mapsto \g_{i+1}^{-1}{\g_i}\g_{i+1}\\
   \g_k&\mapsto \g_k \ \ \  \text{for all }k\ne i,i+1 
\end{align*}
Geometrically, $\s_i$ represents a counterclockwise half-twist about the $i$-th and $(i+1)$-th punctures.

The total winding number $\omega$ of $\gamma\in \pi_1(D_n,p)$ is
\begin{equation}
\label{winding_def}
\omega(\gamma):= \sum_{i=1}^{n} \text{the winding number of $\gamma$
relative to the $i$-th puncture}.
\end{equation}
This defines a surjective homomorphism $\omega: \pi_1(D_n,p)\to \ZZ$, with $\omega(\g_i)=1$ for all $i=1,\cdots,n$. 
From the formula (\ref{braid action}) we can see that the action of $B_n$ on $\pi_1(D_n,p)\cong F_n$ satisfies
\begin{equation}
\label{winding}
\omega(\sigma\cdot\gamma)=\omega(\gamma),\ \ \ \ \forall\s\in B_n,\forall\g\in F_n
\end{equation}
Let $X_n$ denote the infinite cyclic cover of $D_n$ corresponding to the kernel of the group homomorphism $\omega: \pi_1(D_n,p)\to \ZZ$. Because of (\ref{winding}), any mapping
class $\sigma$ of the punctured disc will lift to a mapping class on the cyclic cover $X_n$. In this way $B_n$ acts on $H_1(X_n;\ZZ)$. A standard computation shows that $H_1(X_n;\ZZ)$ is a free $\ZZZ$-module of rank $n-1$. In fact, the action of $B_n$ on $H_1(X_n;\ZZ)$ is equivalent to the reduced Burau representation in the sense that there is a commutative diagram as follows
\begin{equation}
\begin{tikzpicture}
\label{commute}
\diagram (m)
{B_n & GL_{n-1}\ZZZ\\
 \mathrm{Mod}(D_n) \ & \mathrm{Aut}(H_1(X_n;\ZZ)) \\};
\path [->] (m-1-1) edge node [above] {$\rho$} (m-1-2)
                   edge node [left] {$\cong$} (m-2-1)
           (m-1-2) edge node [right] {$\cong$} (m-2-2)
           (m-2-1) edge node [above] {} (m-2-2);
\end{tikzpicture}
\end{equation}
All arrows are group homomorphisms. For a proof that the diagram commutes, please see Theorem 3.7 and Remark 3.11 in \cite{KT}.

\subsection{Proof of Theorem \ref{braid}}

In this subsection, we prove Theorem \ref{braid}. The strategy is to use the Hochschild-Serre spectral sequence to relate $H_*(B_n;V_n)$ and the homology of  Artin groups of type $\mathbb{B}$ over a rank 1 representation $\RR$.

\begin{lemma}
\label{reduction}
There is an isomorphisms of $\RR$-modules
$$H_*(B_n;V_n)\cong H_*(B_n;H_1(F_n;\RR))$$
where on the right hand side, the action of $F_n$ on $\RR$ is given by $\g_i\mapsto t$, and the action of $B_n$ on $H_1(F_n;\RR)$ is induced by the action $B_n\curvearrowright F_n$ given in \ref{braid action}. 
\end{lemma}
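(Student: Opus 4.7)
The plan is to deduce the lemma from an isomorphism of $B_n$-modules $V_n \cong H_1(F_n;\RR)$; once that is established, applying the functor $H_*(B_n;-)$ to both sides yields the claim immediately. The identification itself will combine the topological description of the reduced Burau representation summarized in diagram (\ref{commute}) with Shapiro's lemma applied to the covering $X_n \to D_n$.

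First, I would complexify the commutative diagram (\ref{commute}) to conclude that $V_n \cong H_1(X_n;\CC)$ as $B_n$-modules. Here the $B_n$-action on $X_n$ arises from lifting mapping classes of $D_n$ to the infinite cyclic cover, which is well-defined precisely because the braid action on $F_n$ preserves the total winding number $\omega$, as recorded in equation (\ref{winding}).

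Next, I would rewrite $H_1(X_n;\CC)$ as group homology. Since $D_n$ deformation retracts onto a wedge of $n$ circles, it is a $K(F_n,1)$, and therefore the cover $X_n$ corresponding to $\ker\omega \subset F_n$ is a $K(\ker\omega,1)$. This gives $H_1(X_n;\CC) \cong H_1(\ker\omega;\CC)$, and Shapiro's lemma then yields
\[
H_1(\ker\omega;\CC) \;\cong\; H_1\bigl(F_n;\; \CC[F_n]\otimes_{\CC[\ker\omega]}\CC\bigr) \;\cong\; H_1(F_n;\RR),
\]
where $\RR = \CC[F_n/\ker\omega] = \CC[\ZZ]$ carries the $F_n$-action induced by $\omega$, so each generator $\gamma_i$ acts by multiplication by $t$, exactly as in the statement of the lemma.

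Finally, I would verify $B_n$-equivariance of the composite isomorphism $V_n \cong H_1(F_n;\RR)$. Both actions descend from the underlying geometric $B_n$-action on $D_n$: on the left through lifts of mapping classes to $X_n$, and on the right through the homomorphism $\phi:B_n\to\mathrm{Aut}(F_n)$ of (\ref{braid action}), which preserves $\ker\omega$ by (\ref{winding}). The Shapiro isomorphism is natural with respect to automorphisms of $F_n$ preserving $\ker\omega$, so the equivariance is automatic. The main technical care, and what I expect to be the main obstacle, lies in tracking how the $B_n$-action transports through Shapiro's lemma with the correct conventions for induced modules and basepoints; once this bookkeeping is settled, applying $H_*(B_n;-)$ to the isomorphism of coefficient modules produces the stated isomorphism of $\RR$-modules.
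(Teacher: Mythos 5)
Your proposal is correct and follows essentially the same route as the paper: identify $V_n\cong H_1(X_n;\CC)$ via the complexified diagram (\ref{commute}), pass to group homology of $\ker\omega$ using that $X_n$ is a $K(\ker\omega,1)$, apply Shapiro's lemma to get $H_1(F_n;\CC[F_n/\ker\omega])\cong H_1(F_n;\RR)$, and check that the identifications are compatible with the $B_n$-action coming from lifting mapping classes (equivalently, with the deck-group action of $\langle t\rangle$). The paper's proof is exactly this chain of isomorphisms together with the same equivariance check, so no further comparison is needed.
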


\begin{proof}
Since (\ref{commute}) is a commutative diagram with the two vertical arrows being isomorphisms, we immediately have 
$$H_*(B_n;V_n)\cong H_*(B_n;H_1(X_n;\CC)).$$
On the other hand, the total winding number $\omega: F_n\to\ZZ$ gives the following isomorphisms
\begin{align}
H_1(X_n;\CC)\nonumber&\cong H_1(Ker\omega;\CC) &\text{$X_n$ is a $K(Ker\omega,1)$}\\
\nonumber& \cong H_1(F_n;Ind^{F_n}_{Ker\omega}\CC) \nonumber&\text{Shapiro's lemma}\\
\nonumber& \cong H_1(F_n;\CC[F_n/{Ker\omega}]) &\\
\nonumber& \cong H_1(F_n;\CC[\ZZ]) &\\
\label{iso}& \cong H_1(F_n;\RR)&\text{identifying $\ZZ\cong\langle t\rangle$ by $1\mapsto t$}
\end{align}
We check that the isomorphisms in (\ref{iso}) are all maps of $(B_n,\langle t\rangle)$-bimodules.
In $H_1(F_n;\RR)$, each generator $\g_i$ of $F_n$ acts on $\RR$ by multiplying by $t$. Moreover, $H_1(F_n;\RR)$ has an action of $B_n$ which is induced by the action $\phi:B_n\to \mathrm{Aut}(F_n)$ given in \ref{braid action}. Since the $B_n$-action on $H_1(X_n;\CC)$ is defined by lifting mapping classes to the cover $X_n$, it necessarily commutes with the action of the deck group $\ZZ\cong\langle t\rangle$.  Therefore the claim follows.
\end{proof}

The right hand side of Lemma \ref{reduction} suggests us to look at the semidirect product $F_n\rtimes_\phi B_n$ where $\phi:B_n\to \mathrm{Aut}(F_n)$ is given in (\ref{braid action}). It turns out that $F_n\rtimes_\phi B_n$ is isomorphic to the Artin group of type $\BB$, which we will explain below. 

\begin{defn}
The Artin group of type $\BB$, which we will denote by $\D$, is the group with the following presentation 
\begin{align}
\nonumber\D = \langle \e_1,\cdots,\e_{n-1},\e_n\ |\ &\e_i\e_j=\e_j\e_i\ \ \text{for } |i-j|>1\\
&\nonumber\e_i\e_{i+1}\e_i=\e_{i+1}\e_{i}\e_{i+1}\ \ \text{for }i< n-1\\
&\label{Artin's presentation} \e_{n-1}\e_n\e_{n-1}\e_n=\e_n\e_{n-1}\e_n\e_{n-1} \rangle.
\end{align}
\end{defn}

There is an isomorphism $\D\to F_n\rtimes_\phi B_n$.  The isomorphism can be given explicitly on the standard generators $\g_1,\cdots,\g_n$ of $F_n$ and $\s_1,\cdots,\s_{n-1}$ of $B_n$.
\begin{align}
F_n\rtimes_{\phi}B_n &\longrightarrow \D &\ \label{chow}\\
\nonumber \s_i&\mapsto \e_i  &\text{for $i=1,\cdots,n-1$}\\
\nonumber \g_i&\mapsto (\e_i\e_{i+1}\cdots \e_{n-1})\e_n(\e_i\e_{i+1}\cdots \e_{n-1})^{-1} &\text{for $i=1,\cdots,n-1$} \\
\nonumber\g_n&\mapsto \e_n &\
\end{align}
For a proof of the fact that (\ref{chow}) is an isomorphism, please refer to \cite{CP}.

\begin{rmk}
A geometric way to see $\D\cong F_n\rtimes_\phi B_n$ is to recognize that $\D$ and $F_n\rtimes_\phi B_n$ give different presentations of the same group, namely, the \emph{annular braid group} which is the subgroup of $B_{n+1}$ that leaves the $(n+1)$-th puncture invariant. The study of the annular braid group dates back to Chow in the 40's. We will not need this perspective in this paper and refer the reader to   \cite{Chow} and \cite{CMS} for more discussion on the relation among these groups. 
\end{rmk}

\begin{proof}[Proof of Theorem \ref{braid}.]

First of all, our proof  will use Theorem 4.2 of \cite{CMS} in a crucial way. 
\begin{thm}[Callegaro-Moroni-Salvetti]
\label{abelian}
Suppose $\RRR$ is the $\D$-module where $\e_n$ acts by $t$-multiplication and $\e_1,\cdots,\e_{n-1}$ acts trivially, then
\[
H^k(\D;\RRR)=
  \begin{cases}
   \RRR/(1-t) \ \ \ &  k=1,\cdots,n-1 \\
   \RRR/(1-t) \ \ \ & k=n,\text{and $n$ is odd} \\
   \RRR/(1-t^2) \ \ \ & k=n,\text{and $n$ is even} 
  \end{cases}
\]
\end{thm}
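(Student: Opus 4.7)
The plan is to prove this by means of the Salvetti complex $\mathcal{X}$ associated to the type $\BB$ Coxeter system. Since $W_B$ is finite, $\mathcal{X}$ is a finite CW-complex that is a $K(\D,1)$, with one cell of dimension $|T|$ for each subset $T \subseteq \{\e_1, \ldots, \e_n\}$, and with a unique top cell in dimension $n$. The cellular cochain complex with local coefficients $C^*(\mathcal{X}; \RRR)$ is given by Salvetti's combinatorial formula, with differentials built out of Poincar\'e polynomials of parabolic subgroups of $W_B$ evaluated on the one-dimensional representation $\D \to \RRR^\times$.

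The first step is to exploit the extreme simplicity of the coefficient module: only $\e_n$ acts nontrivially (by multiplication by $t$), while $\e_1,\ldots,\e_{n-1}$ act trivially. This decouples the cochain complex along the filtration by whether $T$ contains $\e_n$. I would analyze the resulting short exact sequence of cochain complexes, in which the subcomplex spanned by subsets $T \subseteq \{\e_1,\ldots,\e_{n-1}\}$ is the type $A_{n-1}$ Salvetti complex with \emph{constant} coefficients $\RRR$ (whose cohomology is just the rational cohomology of $B_n$, concentrated in degrees $0$ and $1$), while the quotient complex carries the nontrivial twist. The $t$-factorials $[k]_t! = \prod_{j=1}^{k}(1+t+\cdots+t^{j-1})$, which are the Poincar\'e polynomials of the type-$A$ parabolics, appear as the crucial twisting factors in the connecting maps.

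Using that modulo $(1-t)$ the higher cyclotomic polynomials $[j]_t$ with $j>1$ odd become units in $\RRR/(1-t)$, the resulting long exact sequence collapses in each intermediate degree $k=1,\ldots,n-1$ to give $H^k(\D;\RRR)\cong \RRR/(1-t)$. The main obstacle, and the delicate heart of the argument, is the top degree $n$. Here the Poincar\'e polynomial of the whole type-$\BB$ Coxeter group, $W_B(t)=\prod_{i=1}^{n}(1+t+\cdots+t^{2i-1})$, enters through the contribution of the top cell, and in particular its factor $[2]_t=1+t$ — which is \emph{not} a unit modulo $(1-t)$. Whether this factor survives in $H^n(\D;\RRR)$ is governed by a sign contributed by the longest element $w_0 \in W_B$: since $\ell(w_0)=n^2$ has the same parity as $n$, the sign cancels the $(1+t)$ factor when $n$ is odd, yielding $\RRR/(1-t)$, whereas for even $n$ it survives and produces $\RRR/((1-t)(1+t))=\RRR/(1-t^2)$. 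Carrying out this sign bookkeeping precisely through the twisted Salvetti differential is the main technical challenge.
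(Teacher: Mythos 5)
First, a point of order: the paper does not prove this statement at all --- it is quoted as Theorem 4.2 of Callegaro--Moroni--Salvetti \cite{CMS}, and the ``proof'' in the paper is the citation. So the real comparison is with the argument in \cite{CMS}, and there your overall strategy is indeed theirs: compute $H^*(\D;\RRR)$ from the finite algebraic Salvetti complex, whose $k$-cochains are free over $\RRR$ on the subsets $T\subseteq\{\e_1,\dots,\e_n\}$ with $|T|=k$, whose differentials are built from weighted Poincar\'e polynomials of the parabolic subgroups $W_T$, and which one splits according to whether $\e_n\in T$ so that one piece is the type-$A_{n-1}$ complex with constant coefficients. (One bookkeeping slip: since the cochain differential sends $e_T$ to cells indexed by $T'\supset T$, the span of the $T$ containing $\e_n$ is the \emph{sub}complex and the type-$A_{n-1}$ part is the \emph{quotient}, not the other way around.)

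Two gaps are more serious. First, the weights in the Salvetti differential must match the representation: here $\e_1,\dots,\e_{n-1}$ act by $1$, so the type-$A$ Poincar\'e polynomials enter only evaluated at a constant; they are not ``$t$-factorials $[k]_t!$''. The genuine polynomials in $t$ come from the parabolics containing $\e_n$, which are of type $B_j$ (times type-$A$ factors) and contribute products of $[2i]_t=(1+t)[i]_{t^2}$. Consequently $(1+t)$ factors appear in the differentials of the twisted subcomplex in \emph{every} degree, not only at the top cell, and showing that they all cancel in degrees $k<n$ is precisely the content of the theorem; the sketch does not address this. Second, the reduction-mod-$(1-t)$ argument cannot establish the statement even in the intermediate degrees: every $[j]_t$ reduces to $j\in\QQ^\times$ and hence is a unit mod $(1-t)$ (the restriction to odd $j$ is a confusion with divisibility by $1+t$), and moreover $\RRR/(1-t)$ and $\RRR/(1-t^2)$ have the same image under $-\otimes_{\RRR}\RRR/(1-t)$, so this reduction cannot distinguish the two possible answers. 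To pin down the $\RRR$-module structure one must compute honest kernels and cokernels over the PID $\RRR$, e.g.\ by localizing at each cyclotomic prime $(\varphi_m(t))$ and proving acyclicity away from $m=1$ (and, in top degree for even $n$, $m=2$); the parity-of-$\ell(w_0)$ observation is a plausible guess at the mechanism in top degree but is not carried out. If you want a self-contained proof rather than the citation, the cleanest route is the two-variable computation of \cite{CMS} followed by specialization of the type-$A$ parameter.
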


Using Theorem \ref{abelian}, we calculate the homology using the Universal Coefficient Theorem.

\begin{lemma}
\label{homology}
$H_{k-1}(\D;\RR)\cong H^k(\D;\RR)$ as $\RR$-modules.
\end{lemma}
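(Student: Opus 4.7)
The plan is to apply the Universal Coefficient Theorem over the coefficient ring $\RR=\CC[t,t^{-1}]$, which is a PID. Since $\D$ admits a finite $K(\D,1)$ (the Salvetti complex, of dimension $n$), its cellular chain complex with coefficients in $\RR$ is a bounded complex of finitely generated free $\RR$-modules, so UCT provides split short exact sequences
\[
0\longrightarrow \mathrm{Ext}^1_\RR\bigl(H_{k-1}(\D;\RR),\RR\bigr)\longrightarrow H^k(\D;\RR)\longrightarrow \mathrm{Hom}_\RR\bigl(H_k(\D;\RR),\RR\bigr)\longrightarrow 0.
\]
The claim then reduces to showing that every $H_k(\D;\RR)$ is a finitely generated torsion $\RR$-module. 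Granting this, over the PID $\RR$ any such module $M\cong\bigoplus_i\RR/(f_i)$ satisfies $\mathrm{Hom}_\RR(M,\RR)=0$ and $\mathrm{Ext}^1_\RR(M,\RR)\cong M$ (the latter via the free resolution $0\to\RR\xrightarrow{f_i}\RR\to\RR/(f_i)\to 0$ of each cyclic summand), so UCT collapses to the desired isomorphism $H^k(\D;\RR)\cong H_{k-1}(\D;\RR)$.

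To prove torsion, I would pass to the fraction field $\CC(t)$ of $\RR$ and show that $H_k(\D;\CC(t))=0$ for every $k$. Since the Salvetti chain complex consists of finitely generated free modules and the localization $\RR\to\CC(t)$ is flat, localization commutes with taking (co)homology, yielding $H^*(\D;\RR)\otimes_\RR\CC(t)\cong H^*(\D;\CC(t))$ and likewise for homology. By Theorem \ref{abelian} (extended from $\RRR$ to $\RR$ via the flat base change $\QQ\hookrightarrow\CC$), every $H^k(\D;\RR)$ is either zero or isomorphic to one of the torsion modules $\RR/(1-t)$, $\RR/(1-t^2)$, each of which is killed by inverting $1-t$. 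Hence $H^k(\D;\CC(t))=0$ for all $k$. Over the field $\CC(t)$, UCT degenerates to $H^k\cong\mathrm{Hom}_{\CC(t)}(H_k,\CC(t))$, so $H_k(\D;\CC(t))=0$ as well, which is precisely the statement that $H_k(\D;\RR)$ is torsion.

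Feeding torsion back into UCT yields the isomorphism for free, so the sole nontrivial step is the torsion reduction. A more hands-on attempt via the Hochschild–Serre spectral sequence for $1\to F_n\to\D\to B_n\to 1$ would entangle us with the very Burau-coefficient homology groups that Theorem \ref{braid} is trying to compute downstream, so routing through the already-known cohomological calculation of Callegaro–Moroni–Salvetti and localizing at $\CC(t)$ gives the cleanest path.
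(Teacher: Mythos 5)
Your proof is correct, and its skeleton --- the Universal Coefficient Theorem over the PID $\RR$, reduction to the claim that each $H_k(\D;\RR)$ is a finitely generated torsion module, and the identification $\mathrm{Ext}^1_\RR(M,\RR)\cong M$ for such $M$ --- is exactly the paper's. Where you genuinely diverge is in how the torsion claim is established. The paper never leaves the UCT exact sequence: since $H^k(\D;\RR)$ is torsion by Theorem \ref{abelian}, its quotient $\mathrm{Hom}_\RR(H_k(\D;\RR),\RR)$ is both torsion and free, hence zero, and this forces $H_k(\D;\RR)$ to have no free summand, i.e.\ to be torsion. Your route instead localizes at the fraction field $\CC(t)$: flat base change along $\RR\to\CC(t)$ (legitimized by the finite free Salvetti chain complex) annihilates the torsion cohomology, UCT over the field then annihilates the homology, and $H_k(\D;\RR)\otimes_\RR\CC(t)=0$ is precisely torsionness. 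Both arguments work; yours costs an extra appeal to a finite free chain model and to exactness of localization, but in exchange it makes explicit the finite-generation hypothesis that the paper's structure-theorem steps use silently, and it cleanly packages the principle that torsion cohomology forces torsion homology. Your closing observation that a Hochschild--Serre approach here would be circular is also accurate: the paper runs that spectral sequence only after this lemma is available.
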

\begin{proof}
 We will use $R$ to abbreviate $\RR$ in this proof only. 

Observe $R$ is a principal ideal domain. The Universal
Coefficient Theorem gives a short exact sequence
$$0\to Ext_{R}^1(H_{k-1}(\D;R),R)\to
H^k(\D;R)\to Hom_{R}(H_k(\D;R),R)\to0$$
Note that $H^k(\D;R)$ is always a torsion $R$-module by Theorem \ref{abelian}. Therefore, $Hom_{R}(H_k(\D;R);R)=0$ because it is both free and
a quotient of a torsion module. This implies that
$H_k(\D;R)$ contains no free submodule and hence is
torsion for all $k$. Therefore,
$$H_{k-1}(\D;R)\cong
Ext_{R}^1(H_{k-1}(\D;R),R)\cong
H^{k}(\D;R)).$$
\end{proof}

On the other hand, the isomorphism $\D\cong F_n\rtimes_\phi B_n$
 yields a split exact sequence 
$$1\to F_n\to \D\to B_n\to1$$
where $F_n$ is generated by $\g_1,\cdots, \g_n$, and $B_n$ is generated by $\s_1,\cdots,\s_{n-1}$. 
Applying the Hochschild-Serre spectral sequence to the short exact sequence yields a spectral sequence whose $E^2$ page is of the form $$E^2_{p,q}=H_p(B_n;H_q(F_n;\RR))\Rightarrow H_{p+q}(\D;\RR).$$ 
where each generator $\g_i$ of $F_n$ acts on $\RR$ by $t$-multiplication, and the action of $B_n$ on $H_q(F_n;\RR)$ is induced by $\phi:B_n\to \mathrm{Aut}(F_n)$ in \ref{braid action}.
The $E^2$ differential goes $d^2: E^2_{p,q}\to E^2_{p-2,q+1}$, hence is zero except when $p\geq 2$ and $q=0$. When $q=0$, $H_q(F_n;\RR) = \CC$ and $B_n$ acts trivially on it. When $p\geq 2$, $E^2_{p,0}=H_p(B_n;\CC)=0$. Therefore,  all differentials after the $E^2$ page are zero and $E^2=E^\infty$. Moreover,  $E^2_{p,q}$ are all torsion $\RR$-modules, which means the action of $\langle t \rangle\cong\ZZ$ factors through a finite cyclic group $\ZZ/m\ZZ$ for some $m$. By semisimplicity, there is no extension problem.
Therefore, we have
\begin{align}
\nonumber H_k(\D;\RR)&\cong \bigoplus_{p+q=k}E^2_{p,q} = E^2_{k,0}\oplus E^2_{k-1,1}\\
\label{spectral sequence}&=H_k(B_n;H_0(F_n;\RR))\oplus H_{k-1}(B_n;H_1(F_n;\RR))
\end{align}

Moreover, 
\begin{itemize}
\item $H_k(B_n;H_0(F_n;\RR)) = H_k(B_n;\CC)$, with trivial $\CC$-coefficients, because $H_0(F_n;\RR) = \RR_{F_n} = \RR/(1-t)$, on which $B_n$ acts trivially.
\item $H_{k-1}(B_n;H_1(F_n;\RR)) = H_{k-1}(B_n; V_n)$ by Lemma \ref{reduction}.
\end{itemize}

Finally, we combine all the isomorphisms of modules over $\RR$:
\begin{align*}
H_{k}(B_n;V_n)&\cong H_{k+1}(\D;\RR) /
H_{k+1}(B_n;\CC)&\text{by (\ref{spectral sequence})}\\
& \cong H^{k+2}(\D;\RR) /  H_{k+1}(B_n;\CC)\ \ \ \ \ &\text{by Lemma \ref{homology}}
\end{align*}
When $n>2$, Theorem \ref{abelian} says
\[
H^{k+2}(\D;\RR)=
  \begin{cases}
   \RR/(1-t) \ \ \ &  k=0,\cdots,n-3 \\
   \RR/(1-t) \ \ \ & k=n-2,\text{and $n$ is odd} \\
   \RR/(1-t^2) \ \ \ & k=n-2,\text{and $n$ is even} 
  \end{cases}
\]
Arnol'd's computation \cite{Arnol'd} of the homology of the braid group with the trivial coefficients yields 
\[
H_{k+1}(B_n;\CC)=
  \begin{cases}
   \CC \ \ \ &  k=0\\
   0 \ \ \ & k>0
  \end{cases}
\]
Taking the quotient of the two modules, we conclude
\[
H_k(B_n;V_n)=
  \begin{cases}
   0 \ \ \ & k=0\\
   \RR/(1-t) \ \ \  & 0< k< n-2 \\
   \RR/(1-t) \ \ \ & k=n-2,\text{and $n$ is odd} \\
   \RR/(1-t^2) \ \ \ & k=n-2,\text{and $n$ is even} 
  \end{cases}
\]
Theorem \ref{braid} is established.
\end{proof}

\subsection{The reduced Burau representation specialized at $t=\zeta$}
\label{specialized Burau section}

For any $\zeta\in\CC\setminus \{0\}$, we can obtain an $(n-1)$-dimensional representation of $B_n$ over $\CC$ if we replace the formal variable $t$ in the reduced Burau representation by $\zeta$. More precisely, we define the following $B_n$-representations
\begin{itemize}
\item $\CC_\zeta:=\CC[t,t^{-1}]/(t-\zeta)$ with trivial $B_n$ action.
\item $V_n(\zeta):= V_n\underset{\RR}\otimes \CC_{\zeta},$ where $V_n$ and $\CC_\zeta$ have $B_n$ actions as defined before.
\end{itemize}
In fact, $\CC_\zeta$ and $V_n(\zeta)$ are both $(B_n,\langle t\rangle)$-bimodules. This gives a family of $(n-1)$ dimensional representations $$\rho_\zeta:B_n\to GL(V_n(\zeta))\cong GL_{n-1}(\CC),\ \ \ \ \ \ \ \forall\zeta\in\CC\setminus\{0\}.$$ 
$V_n(1)$ is also called \emph{the reduced Coxeter representation}
;  $V_n(-1)$ is also called \emph{the reduced integral Burau representation}.

In order to compute $H_*(B_n;V_n(\zeta))$ from $H_*(B_n;V_n)$, we apply Universal Coefficient Theorems for twisted coefficients. See Th\'eor\`eme I.5.4.2. and Th\'eor\`eme I.5.5.2 of \cite{G}.
\begin{thm}[Universal Coefficient Theorem for twisted coefficients]
\label{UCT}
For $G$ a group and $R$ a principal ideal domain. Suppose $V$ is a $RG$-module, and $M$ is an  $R$-module with trivial $G$-action. There are the following split short exact sequences of $R$-modules:
\begin{enumerate}[label=(\roman*)]
\item $0\to H_k(G; V)\underset{R}\otimes M \to H_k(G; V\underset{R}\otimes M)\to Tor(H_{k-1}(G;V),M)\to 0$
\item $0\to Ext(H_{k-1}(G;V),M)\to H^k(G; Hom_R(V, M))\to Hom_R(H_k(G; V), M)\to 0.$
\end{enumerate}
\end{thm}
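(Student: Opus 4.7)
The plan is to reduce both short exact sequences to the classical Universal Coefficient Theorem for chain complexes of free modules over a PID. First I would fix a resolution $P_\bullet \twoheadrightarrow R$ of the trivial $RG$-module $R$ by free $RG$-modules. Then by definition $H_k(G;V) = H_k(P_\bullet \otimes_{RG} V)$ and $H^k(G;W) = H^k(\mathrm{Hom}_{RG}(P_\bullet, W))$ for any $RG$-module $W$. Set $C_\bullet := P_\bullet \otimes_{RG} V$. Since each $P_i = (RG)^{a_i}$ is free, we have $C_i \cong V^{a_i}$, and once we assume (as in the paper's situation where $V = V_n = \RR^{n-1}$) that $V$ is free --- or at least flat --- over $R$, the chain complex $C_\bullet$ consists of free $R$-modules, so the classical UCT applies to it.

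For part (i), because $G$ acts trivially on $M$, the diagonal $G$-action on $V \otimes_R M$ reduces to $g(v \otimes m) = gv \otimes m$. Tensor associativity then gives a chain-level isomorphism of $R$-complexes
\[
P_\bullet \otimes_{RG}(V \otimes_R M) \;\cong\; (P_\bullet \otimes_{RG} V) \otimes_R M \;=\; C_\bullet \otimes_R M.
\]
Applying the classical homological UCT to $C_\bullet$ produces the split short exact sequence
\[
0 \to H_k(C_\bullet) \otimes_R M \to H_k(C_\bullet \otimes_R M) \to \mathrm{Tor}^R_1(H_{k-1}(C_\bullet), M) \to 0,
\]
which after substituting $H_k(C_\bullet) = H_k(G;V)$ is precisely sequence (i).

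For part (ii), the tensor--hom adjunction provides a chain-level $R$-module isomorphism
\[
\mathrm{Hom}_{RG}(P_\bullet, \mathrm{Hom}_R(V,M)) \;\cong\; \mathrm{Hom}_R(P_\bullet \otimes_{RG} V, M) \;=\; \mathrm{Hom}_R(C_\bullet, M);
\]
here the triviality of the $G$-action on $M$ is what makes this an isomorphism of $R$-complexes rather than of individual $R$-modules in each degree. Applying the classical cohomological UCT to the free $R$-complex $C_\bullet$ then yields sequence (ii), with splittings inherited from the classical theorem.

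The main subtlety, and the reason the theorem requires care, is the assumption that $C_\bullet$ consists of free (or at least flat) $R$-modules, which in turn requires $V$ to be flat over $R$. In the setting of this paper this holds automatically since $V_n = \RR^{n-1}$; in general, one either imposes such a hypothesis or passes to a hypertor/hyperext spectral sequence to accommodate arbitrary $V$. The other routine point requiring attention is that the two chain-level identifications above commute with the differentials, which follows from the naturality of tensor and Hom but ought to be verified in a careful write-up.
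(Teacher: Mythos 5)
The paper does not actually prove Theorem \ref{UCT}: it is quoted from Godement (Th\'eor\`eme I.5.4.2 and I.5.5.2 of \cite{G}), so there is no in-paper argument to compare against. Your derivation is the standard one and is correct in outline: fix a free $RG$-resolution $P_\bullet\to R$, set $C_\bullet=P_\bullet\otimes_{RG}V$, identify $P_\bullet\otimes_{RG}(V\otimes_R M)\cong C_\bullet\otimes_R M$ (tensor associativity, using that $G$ acts on $V\otimes_R M$ through $V$ alone) and $\mathrm{Hom}_{RG}(P_\bullet,\mathrm{Hom}_R(V,M))\cong\mathrm{Hom}_R(C_\bullet,M)$ (tensor--hom adjunction), and then invoke the classical UCT for complexes over the PID $R$. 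You are also right to flag the one hypothesis the paper's statement omits: as literally stated the theorem is false for arbitrary $RG$-modules $V$ (take $G$ trivial, $R=\ZZ$, $V=M=\ZZ/2$; sequence (i) in degree $k=1$ would force $\mathrm{Tor}(\ZZ/2,\ZZ/2)=0$), so one must assume $V$ is free, or at least that $C_\bullet$ becomes a complex of projective $R$-modules; this holds in every application in the paper, since $V_n=\RR^{n-1}$ is free over $\RR$ and $V_n(\zeta)$ is a $\CC$-vector space. One small correction to your sketch: for the cohomological sequence (ii), flatness of the $C_i$ is not quite enough --- the classical argument needs the boundaries $B_{k-1}\subset C_{k-1}$ to be projective so that $0\to Z_k\to C_k\to B_{k-1}\to 0$ splits. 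Over a PID this is automatic when the $C_i$ are free (submodules of free modules over a PID are free), which is the case here, but ``flat'' alone would not suffice in general.
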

%
%
%


Let us denote
\begin{equation}
\label{check}
V_n(\zeta)^{\vee}:= Hom_\CC(V_n(\zeta),\CC)
\end{equation}
where $B_n$ acts on $V_n(\zeta)^{\vee}$ from the left by $(\s\cdot f)(v):=f(\s^{-1}\cdot v)$. The following proposition computes $H_*(B_n;V_n(\zeta))$ and $H^*(B_n;V_n(\zeta)^\vee)$ for all $\zeta\in\CC\setminus\{0\}$. 
\begin{prop}
\label{specialized Burau}
Suppose $n>2$ and $\zeta\in\CC\setminus\{0\}$, we have 
\begin{enumerate}
\item If $\zeta=1$, 
\[
dim_\CC H_k(B_n;V_n(\zeta))=
  \begin{cases}
   1 \ \ \ &  k=1 \\
   2 \ \ \ & 1<k<n-1 \\
   1\ \ \ &k= n-1\\
  \end{cases}
\]

\item If $\zeta=-1$ and $n$ is an even number
\[
dim_\CC H_k(B_n;V_n(\zeta))=
   1 \ \ \   k=n-2,n-1 
\]

\item If $\zeta^2\ne1$, or if $\zeta=-1$ and $n$ is an odd number
\[
H_k(B_n;V_n(\zeta))=0\ \ \ \ \text{for all $k$}
\]

\item For all $\zeta\in\CC\setminus\{0\}$, 
$$dim_\CC H^k(B_n;V_n(\zeta)^\vee) = dim_\CC H_k(B_n;V_n(\zeta))\ \ \ \ \forall k.$$

\end{enumerate}
\end{prop}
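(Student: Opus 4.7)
The plan is to deduce the proposition directly from Theorem \ref{braid} via the two Universal Coefficient Theorems packaged in Theorem \ref{UCT}. For parts (1)--(3), I would apply part (i) with $G=B_n$, $R=\RR$, $V=V_n$, and $M=\CC_\zeta$, noting that $V_n\otimes_\RR\CC_\zeta=V_n(\zeta)$ by definition and that $B_n$ acts trivially on $\CC_\zeta$. This produces a split short exact sequence
$$0 \to H_k(B_n;V_n)\otimes_\RR\CC_\zeta \to H_k(B_n;V_n(\zeta)) \to \mathrm{Tor}^1_\RR(H_{k-1}(B_n;V_n),\CC_\zeta)\to 0,$$
so each $H_k(B_n;V_n(\zeta))$ is $\CC$-linearly the direct sum of the two outer terms.

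Next I would compute how the functors $-\otimes_\RR\CC_\zeta$ and $\mathrm{Tor}^1_\RR(-,\CC_\zeta)$ act on the cyclic $\RR$-modules appearing in Theorem \ref{braid}. Using the length-two free resolution $0\to\RR\xrightarrow{1-t^j}\RR\to\RR/(1-t^j)\to 0$ and tensoring with $\CC_\zeta$ yields the two-term complex $\CC\xrightarrow{1-\zeta^j}\CC$, whose two homologies are each $\CC$ when $\zeta^j=1$ and $0$ otherwise. Hence both $\RR/(1-t)\otimes\CC_\zeta$ and $\mathrm{Tor}^1(\RR/(1-t),\CC_\zeta)$ equal $\CC$ exactly when $\zeta=1$, and both $\RR/(1-t^2)\otimes\CC_\zeta$ and $\mathrm{Tor}^1(\RR/(1-t^2),\CC_\zeta)$ equal $\CC$ exactly when $\zeta^2=1$.

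Substituting the explicit values of $H_*(B_n;V_n)$ from Theorem \ref{braid} into the split sequence, a short case analysis delivers parts (1)--(3): when $\zeta=1$ every nonzero $H_k$ and $H_{k-1}$ each contribute a copy of $\CC$, giving the $1,2,2,\ldots,2,1$ pattern; when $\zeta=-1$ only the top piece $H_{n-2}(B_n;V_n)=\RR/(1-t^2)$ contributes, and only when $n$ is even, which yields a single $\CC$ in degrees $n-2$ and $n-1$; when $\zeta^2\ne 1$, or when $\zeta=-1$ with $n$ odd, every relevant cyclic module is $\RR/(1-t)$ with $\zeta\ne 1$, so all pieces vanish.

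For part (4), I would apply part (ii) of Theorem \ref{UCT} with $R=\CC$, $V=V_n(\zeta)$, and $M=\CC$ with trivial $B_n$-action, so that $\mathrm{Hom}_\CC(V,M)=V_n(\zeta)^\vee$. Since $\CC$ is a field, $\mathrm{Ext}^1_\CC$ vanishes identically, so the short exact sequence collapses to $H^k(B_n;V_n(\zeta)^\vee)\cong\mathrm{Hom}_\CC(H_k(B_n;V_n(\zeta)),\CC)$, which has the same $\CC$-dimension. I do not expect a serious obstacle: Theorem \ref{braid} has already done the heavy lifting, and the only care required is bookkeeping at the boundary degree $k=n-2$ (where the module type changes when $n$ is even) and at small $n$ such as $n=3$, where the ranges $0<k<n-2$ and $k=n-2$ collide.
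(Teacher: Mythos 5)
Your proposal is correct and follows essentially the same route as the paper, which likewise deduces claims (1)--(3) from Theorem \ref{braid} via Theorem \ref{UCT}(i) and claim (4) from Theorem \ref{UCT}(ii); you have simply written out the $\otimes_\RR\CC_\zeta$ and $\mathrm{Tor}$ computations for the cyclic modules $\RR/(1-t^j)$ and the resulting case analysis, which the paper leaves implicit. The bookkeeping at $k=n-2$ and the degenerate case $n=3$ checks out.
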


\begin{proof}
Claims (1)-(3) follow directly from Theorem \ref{braid} and Theorem \ref{UCT}, (i). Claim (4) follows from the Theorem \ref{UCT}, (ii).

%
%
%
%
\end{proof}
\begin{rmk}
\label{Coxeter}
When $\zeta=1$, Proposition \ref{specialized Burau} recovers Vassiliev's computation \cite{V} of $H^*(B_n;\CC^{n-1})$ where $B_n$ acts on $\CC^{n-1}$ by the reduced Coxeter representation. 
\end{rmk}

\section{The expected number of $\fq$-points on a random superelliptic curve}
\label{counting}

In this section, we will apply the topological results in the previous section to calculate the expected number of $\fq$-points on a random superelliptic curve. The bridge that connects topology and counting $\fq$-points is provided by the theory of \'etale cohomology and the Grothendieck-Lefschetz fixed point formula. The paper \cite{CEF} by Church-Ellenberg-Farb contains a very accessible explanation of this connection.

Fix natural numbers $d$ and $n$.  Consider the variety:
\begin{align*}
\mathrm{E}_{n,d} &= \{(x,y,a_1,\cdots,a_n)\in \A^{n+2}\ :\ \Delta\ne0, \ y^d = x^n+a_1x^{n-1}+\cdots+a_{n-1}x+a_n\}
\end{align*} 
where $\Delta=\Delta(a_1,\cdots,a_n)$ again stands for the discriminant of the polynomial $x^n+a_1x^{n-1}+\cdots+a_{n-1}x+a_n$. There is a morphism 
\begin{align}
\label{variety}
\pi: \mathrm{E}_{n,d}&\longrightarrow\Pn\\
\nonumber(x,y,a_1,\cdots,a_n)&\mapsto (a_1,\cdots,a_n)
\end{align}
Observe that the varieties $\mathrm{E}_{n,d}$ and $\Pn$, and the morphism $\pi$ are all defined by polynomial equations with coefficients in $\ZZ$. Therefore, we can consider the morphism $\pi$ on the $\CC$-points and on the $\fq$-points of the varieties, respectively. 

\subsection{The topology of the $\CC$-points}
\label{C-points}

In this subsection we will analyze the topological properties of the morphism $\pi$  on the $\CC$-points: 
$$\nonumber\pi_{/\CC}: \mathrm{E}_{n,d}(\CC)\longrightarrow \nn(\CC)$$
The map $\pi$ gives the following fiber bundle mentioned in the Introduction:
\begin{equation*}
\begin{tikzpicture}
\diagram (m)
{X_f(\CC) & \mathrm{E}_{n,d}(\CC) \\
  \ & f\in\nn(\CC)\\};
\path [->] (m-1-1) edge node [above] {} (m-1-2)
           (m-1-2) edge node [right] {$\pi_{/\CC}$} (m-2-2);
\end{tikzpicture}
\end{equation*}
The fiber is an affine superelliptic curve over $\CC$:
$$X_f(\CC):=\{(x,y)\in\CC^2\ :\ y^d=f(x)\}.$$
For simplicity of notation, we will use $\pi$ to denote $\pi_{/\CC}$, and $X_f$ to denote $X_f(\CC)$ in this subsection. The monodromy representation gives $$B_n\cong\pi_1(\cn, f)\to \mathrm{Aut}\big(H^1(X_f;\CC)\big)$$ with a choice of base point $f\in\cn$. 
The curve $X_f$ is a branch cover of $\CC$ by the projection map
\begin{align*}
X_f=\{(x,y):y^d=f(x)\}&\longrightarrow \CC\\
(x,y)&\mapsto x
\end{align*}
The branch points are exactly the $n$ roots of $f(x)$. The deck group is cyclic of order $d$ and generated by the map $T:(x,y)\mapsto (x,e^{2\pi i/d}y)$.  Since $T^d=\mathrm{Id}$, the eigenvalues of the induced linear map $T^*:H^1(X_f;\CC)\to H^1(X_f;\CC)$ are $d$-th roots of unity. We can decompose $H^1(X_f;\CC)$ into sum of eigenspaces of $T^*$. \begin{equation}
\label{eigen}
H^1(X_f;\CC)\cong \bigoplus_{\zeta:\zeta^d=1}H^1(X_f;\CC)_\zeta 
\end{equation}
where $H^1(X_f;\CC)_\zeta:=Ker(T^*-\zeta I)$.
Moreover, the monodromy action $B_n\curvearrowright H^1(X_f;\CC)$ commutes with $T^*$ and therefore preserves the eigenspaces. Hence (\ref{eigen}) is in fact a decomposition of $H^1(X_f;\CC)$ into $B_n$-subrepresentations.

\begin{lemma}
\label{monodromy}
For $\zeta$ such that $\zeta^d=1$, 
\begin{itemize}
\item If $\zeta=1$, then $H^1(X_f;\CC)_\zeta=0$.
\item If  $\zeta\ne1$, then $H^1(X_f;\CC)_\zeta\cong V_n(\zeta)^\vee $ as $B_n$-modules.
\end{itemize}
\end{lemma}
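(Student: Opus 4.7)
The plan is to first dispose of the trivial eigenspace, then reduce to an unramified cover and invoke the Cartan--Leray spectral sequence. The key observation is that the map $\pi:X_f\to\CC$, $(x,y)\mapsto x$, is precisely the quotient by the $\ZZ/d$-action generated by $T$, and its fixed points are exactly the $n$ ramification points $(\alpha_i, 0)$, where $\alpha_1, \ldots, \alpha_n$ denote the roots of $f$. The case $\zeta = 1$ is then immediate: since $X_f / \langle T \rangle \cong \CC$ and rational cohomology commutes with quotients by finite group actions, $H^1(X_f;\CC)^T = H^1(\CC;\CC) = 0$.

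For $\zeta \ne 1$, let $X_f^* := X_f \setminus \{(\alpha_i,0) : i=1,\ldots,n\}$. Since $X_f$ is a smooth affine curve, $H^2(X_f;\CC) = 0$; and since $H^k(X_f, X_f^*;\CC)$ is concentrated in degree $2$ (where it is $\CC^n$, by excision around the deleted points), the long exact sequence of the pair yields
\begin{equation*}
0 \longrightarrow H^1(X_f;\CC) \longrightarrow H^1(X_f^*;\CC) \longrightarrow \CC^n \longrightarrow 0.
\end{equation*}
Since $T$ fixes each deleted point, the cokernel $\CC^n$ lies in the trivial eigenspace of $T^*$, so $H^1(X_f;\CC)_\zeta \cong H^1(X_f^*;\CC)_\zeta$ for $\zeta\ne 1$. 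To compute $H^1(X_f^*;\CC)_\zeta$, I would use that the infinite cyclic cover $X_n\to D_n$ factors through $X_f^*\to D_n$, producing a regular covering $X_n\to X_f^*$ with deck group $d\ZZ\subset\ZZ$. Because $X_n$ has cohomological dimension $1$ (as a $K(\pi,1)$ for a free group), the Cartan--Leray spectral sequence for this covering degenerates into the short exact sequence
\begin{equation*}
0 \longrightarrow V_n / (t^d - 1) V_n \longrightarrow H_1(X_f^*;\CC) \longrightarrow \CC \longrightarrow 0,
\end{equation*}
with $T_*$ acting as multiplication by $t$ on the left term and trivially on the right. Decomposing $V_n/(t^d-1)V_n \cong \bigoplus_{\zeta^d=1} V_n(\zeta)$ over $\RR$, I identify the $\zeta$-eigenspace of $T_*$ on $H_1(X_f^*;\CC)$ for $\zeta\ne 1$ with $V_n(\zeta)$. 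Dualizing completes the computation: the $\zeta$-eigenspace of $T^*$ on $H^1(X_f^*;\CC)$ is canonically dual to the $\zeta$-eigenspace of $T_*$ on $H_1(X_f^*;\CC)$, so $H^1(X_f^*;\CC)_\zeta \cong V_n(\zeta)^\vee$.

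The main obstacle will be verifying $B_n$-equivariance of each identification, especially the Cartan--Leray degeneration and the final dualization. This should follow from naturality: because the $B_n$-action on $D_n$ (via the mapping class group) preserves the total winding number $\omega$ by (\ref{winding}), every mapping class lifts compatibly to $X_n$, $X_f^*$, and $X_f$, rendering all constructions $B_n$-equivariant.
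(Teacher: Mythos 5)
Your proof is correct, but it routes through different tools than the paper at essentially every step. For the comparison of $X_f$ with the punctured curve (your $X_f^*$, the paper's $Y_f$), the paper works in homology and uses Mayer--Vietoris for the disks glued back over the ramification points, obtaining $H_1(X_f;\CC)=H_1(Y_f;\CC)/H_1(Y_f;\CC)^T$, which simultaneously disposes of the $\zeta=1$ case; you instead use the long exact sequence of the pair in cohomology together with the transfer isomorphism $H^*(X_f/\langle T\rangle;\CC)\cong H^*(X_f;\CC)^T$. For the punctured cover itself, the paper applies Shapiro's lemma to the degree-$d$ cover $Y_f\to D_n$, writes $H_1(Y_f;\CC)=H_1(F_n;\CC[\ZZ/d\ZZ])=\bigoplus_{\zeta^d=1} H_1(F_n;\CC_\zeta)$, and then identifies $H_1(F_n;\CC_\zeta)$ with $V_n(\zeta)$ via the universal coefficient theorem over $\RR$ (the obstruction $Tor(\RR/(1-t),\CC_\zeta)$ vanishes exactly when $\zeta\ne1$); you instead run Cartan--Leray for the residual infinite cyclic cover $X_n\to X_f^*$ with deck group $d\ZZ$ and read off the coinvariants $V_n/(t^d-1)V_n$, decomposed by the Chinese remainder theorem. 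The two computations are consistent: the extension by $\CC$ in your short exact sequence accounts for the same discrepancy at $\zeta=1$ that the paper's Tor term produces. Your route yields all nontrivial eigenspaces at once from a single coinvariants computation; the paper's route makes the $B_n$-equivariance nearly automatic, since every identification is functorial group homology of $F_n$. One point to tighten: the fact that $T$ fixes each deleted point does not by itself place the cokernel $\CC^n\cong\bigoplus_i H^2\bigl(X_f,X_f\setminus\{(\alpha_i,0)\};\CC\bigr)$ in the trivial eigenspace; you also need that $T$ acts trivially on each local cohomology group, which holds because $T$ is holomorphic, hence orientation-preserving of local degree $+1$ at each fixed point. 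With that observation, and the equivariance checks you flag at the end, the argument is complete.
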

Recall $V_n(\zeta)^\vee$ is the dual of the reduced Burau representation specialized at $t=\zeta$, which was defined in (\ref{check}).
\begin{rmk}
In the beautiful paper \cite{McMullen}, McMullen studied the monodromy action of $B_n$ on the first cohomology of the smooth completion of $X_f$. Along the way, he proved several results in section 5 of \cite{McMullen} from which Lemma \ref{monodromy} can be deduced.  For completeness, we will prove Lemma \ref{monodromy} from a different perspective in the Section \ref{appendix}. 
\end{rmk}

\begin{prop}
\label{vanishing}
Suppose $n>2$ 
\begin{enumerate}
\item If either $n$ or $d$ is an odd number, then
$$H^p(\cn;H^1(X_f;\CC))=0\ \ \ \ \ \ \ \ \ \text{for all $p$}$$
\item If both $n$ and $d$ are even numbers, then 
\[
dim_\CC H^p(\cn;H^1(X_f;\CC))=
  \begin{cases}
   1 \ \ \ &  p=n-2,n-1 \\
   0 \ \ \ &  \text{otherwise} \\
  \end{cases}
\]
\end{enumerate}
\end{prop}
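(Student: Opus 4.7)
The plan is to reduce the computation of $H^p(\cn; H^1(X_f;\CC))$ to a direct sum of the groups $H^p(B_n; V_n(\zeta)^\vee)$ indexed by the nontrivial $d$-th roots of unity, and then apply Proposition \ref{specialized Burau}.

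First I would observe that since $\cn$ is a $K(B_n,1)$ by Fadell--Neuwirth, we have $H^p(\cn; W) = H^p(B_n; W)$ for any $B_n$-module $W$. Applying this together with the eigenspace decomposition (\ref{eigen}) and Lemma \ref{monodromy}, we obtain
\[
H^p(\cn; H^1(X_f;\CC)) \;\cong\; \bigoplus_{\substack{\zeta^d=1\\ \zeta\ne 1}} H^p(B_n; V_n(\zeta)^\vee).
\]
(The $\zeta=1$ summand is dropped because Lemma \ref{monodromy} says $H^1(X_f;\CC)_1 = 0$; note the exclusion of the reduced Coxeter representation is exactly why the dimensions in the proposition are so small.) By Proposition \ref{specialized Burau}(4), each summand has the same dimension as $H_p(B_n; V_n(\zeta))$, so it suffices to analyze $H_p(B_n; V_n(\zeta))$ for $\zeta$ a nontrivial $d$-th root of unity.

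Next I would split into the two cases of the proposition and invoke Proposition \ref{specialized Burau}(1)--(3). For a nontrivial $d$-th root of unity $\zeta$, the only possibility with $\zeta^2 = 1$ is $\zeta = -1$, which requires $d$ to be even. Hence:
\begin{itemize}
\item If $d$ is odd, every nontrivial $d$-th root of unity $\zeta$ satisfies $\zeta^2 \ne 1$, so part (3) gives $H_p(B_n; V_n(\zeta)) = 0$ for all $p$, proving (1) in this subcase.
\item If $d$ is even but $n$ is odd, then $\zeta = -1$ appears among the $d$-th roots, but part (3) still forces $H_p(B_n; V_n(-1)) = 0$ since $n$ is odd; all other nontrivial $\zeta$ have $\zeta^2 \ne 1$ and are killed by (3). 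This finishes case (1).
\item If both $n$ and $d$ are even, then $\zeta = -1$ appears and contributes $\dim_\CC H_p(B_n; V_n(-1)) = 1$ exactly for $p = n-2, n-1$ by part (2), while every other nontrivial $d$-th root of unity has $\zeta^2 \ne 1$ and contributes $0$ by part (3). This gives case (2).
\end{itemize}

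There is no serious obstacle: the argument is essentially bookkeeping once Lemma \ref{monodromy} and Proposition \ref{specialized Burau} are in hand. The only point requiring a brief justification is the passage from cohomology of $\cn$ with coefficients in the local system $\{H^1(X_f;\CC)\}$ to group cohomology of $B_n$ with the monodromy representation, and the compatibility of the eigenspace decomposition (which is preserved by monodromy since monodromy commutes with the deck transformation $T$) with the direct sum splitting of cohomology.
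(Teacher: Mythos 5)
Your argument is correct and follows essentially the same route as the paper's proof: identify $H^p(\cn;H^1(X_f;\CC))$ with $H^p(B_n;H^1(X_f;\CC))$ via the $K(B_n,1)$ property, decompose into eigenspaces using Lemma \ref{monodromy}, and apply Proposition \ref{specialized Burau} to each summand. The only difference is that you spell out the case analysis over the nontrivial $d$-th roots of unity, which the paper leaves implicit in ``the claim then follows.''
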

\begin{proof}
Since $\cn$ is a $K(B_n,1)$, we have $$H^p(\cn;H^1(X_f;\CC)) \cong H^p(B_n;H^1(X_f;\CC)).$$
By Lemma \ref{monodromy}, we can decompose
\begin{align*}
H^p(B_n;H^1(X_f;\CC))& \cong H^p(B_n;\bigoplus_{\zeta:\zeta^d=1}H^1(X_f;\CC)_\zeta)\\
& \cong H^p(B_n;\bigoplus_{\zeta^d=1,\zeta\ne1}V_n(\zeta)^\vee)\\
& \cong \bigoplus_{\zeta^d=1,\zeta\ne1}H^p(B_n;V_n(\zeta)^\vee)
\end{align*}
Each individual summand $H^p(B_n;V_n(\zeta)^\vee)$ has been computed in Proposition \ref{specialized Burau}. The claim then follows.
\end{proof}

\begin{prop}
\label{topo input}
When either $n$ or $d$ is odd, the bundle projection $\pi:\mathrm{E}_{n,d}(\CC)\to \nn(\CC)$  induces isomorphisms on the cohomology groups:
$$\pi^*:H^*(\Pn(\CC);\CC)\longrightarrow H^*(\mathrm{E}_{n,d}(\CC);\CC) $$
\end{prop}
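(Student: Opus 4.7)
The plan is to apply the Leray--Serre spectral sequence to the fiber bundle $\pi:\mathrm{E}_{n,d}(\CC)\to\Pn(\CC)$, whose fiber over $f\in\Pn(\CC)$ is the smooth affine superelliptic curve $X_f(\CC)$. In cohomology this reads
\[
E_2^{p,q} = H^p\bigl(\Pn(\CC);\,H^q(X_f(\CC);\CC)\bigr) \Longrightarrow H^{p+q}(\mathrm{E}_{n,d}(\CC);\CC),
\]
and the edge homomorphism $E_2^{p,0}\to H^p(\mathrm{E}_{n,d}(\CC);\CC)$ is precisely the pullback $\pi^*$, once $E_2^{p,0}$ is identified with $H^p(\Pn(\CC);\CC)$ using the triviality of the monodromy on $H^0$. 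The goal is therefore to show that, under the hypothesis that $n$ or $d$ is odd, the $E_2$-page is concentrated in the bottom row.

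The rows $q\geq 2$ vanish for dimension reasons: $X_f(\CC)$ is a smooth complex affine curve, hence homotopy equivalent to a $1$-dimensional CW complex, so $H^q(X_f(\CC);\CC)=0$ for $q\geq 2$. For $q=0$, the polynomial $y^d-f(x)$ is irreducible in $\CC[x,y]$ for any square-free non-constant $f$ (by the standard criterion for irreducibility of $y^d-a$ over $\CC(x)$), so $X_f(\CC)$ is connected and $H^0(X_f(\CC);\CC)=\CC$ carries the trivial $B_n$-action. For $q=1$, which is the only potentially nontrivial row, Proposition \ref{vanishing}(1) is exactly the statement that $E_2^{p,1}=H^p\bigl(\Pn(\CC);\,H^1(X_f(\CC);\CC)\bigr)=0$ for all $p$, under the odd-$n$-or-$d$ hypothesis. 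With only the bottom row surviving, all differentials vanish, the spectral sequence degenerates at $E_2$, and the edge map $\pi^*:H^p(\Pn(\CC);\CC)\to H^p(\mathrm{E}_{n,d}(\CC);\CC)$ is an isomorphism.

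The main technical content is entirely absorbed into Proposition \ref{vanishing}, which itself rests on Theorem \ref{braid}; once these are in hand, the spectral sequence argument is formal, and the only place requiring care is the identification of the edge map with $\pi^*$, which is standard. The one loose end is the small-$n$ cases, since Proposition \ref{vanishing} is stated only for $n>2$. For $n=1$, both $\Pn(\CC)\cong\A^1$ and $\mathrm{E}_{1,d}(\CC)\cong\A^2$ (via $(x,y)\mapsto(x,y)$ with $a_1=y^d-x$) are contractible, so the claim is trivial. For $n=2$ the hypothesis forces $d$ odd; here the same spectral sequence argument applies after feeding in the direct calculation of $H_*(B_2;V_2)$ recorded in the excerpt immediately following Theorem \ref{braid}, combined with Lemma \ref{monodromy} and Theorem \ref{UCT}, to verify $E_2^{p,1}=0$.
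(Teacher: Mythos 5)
Your proof is correct and follows essentially the same route as the paper: the Leray--Serre spectral sequence for the bundle, with the $q=1$ row killed by Proposition \ref{vanishing} and the $q\geq 2$ rows vanishing because the fiber is an affine curve, so that the sequence degenerates and the edge map $\pi^*$ is an isomorphism. The only additions beyond the paper's argument are your explicit verification that the fiber is connected, the identification of the edge homomorphism with $\pi^*$, and the handling of the cases $n\leq 2$ (which the paper leaves implicit even though Proposition \ref{vanishing} is stated only for $n>2$); these are welcome but minor refinements rather than a different approach.
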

\begin{proof}
Under the identification $\Pn(\CC)\cong\cn$, the fibration $X_f\to \mathrm{E}_{n,d}(\CC)\xrightarrow\pi \cn$ gives a spectral sequence of the following form:
$$E^{p,q}_2 = H^p(\cn;H^q(X_f;\CC)) \Longrightarrow H^{p+q}(E;\CC).$$
\begin{itemize}
\item When $q=0$ and $p>2$. $E^{p,0}_2 = H^p(\cn;\CC) = 0$  by the classical result of Arnol'd \cite{Arnol'd}. 
\item When $q=1$, 
$E^{p,1}_2 = H^p(\cn;H^1(X_f;\CC))=0$ by Proposition \ref{vanishing}.
\item When $q>1$, $E^{p,q}_2=0$ since $H^q(X_f;\CC)=0$ for $q>1$.
\end{itemize}
In summary, $E^{p,q}_2=0$ unless $q=0$, and hence all differentials are zero starting at the $E_2$ page. So $E_2 = E_\infty$ and there is no extension problem.
$$H^k(\mathrm{E}_{n,d}(\CC);\CC) \cong \bigoplus_{p+q=k}E_2^{p,q} = E^{k,0}_2 = H^k(\cn;\CC).$$
The isomorphism is induced by the fibration $\pi:\mathrm{E}_{n,d}(\CC)\to\cn$. 
\end{proof}

\subsection{From cohomology to counting points over $\fq$}
\label{etale}
In this section, we prove Corollary \ref{average pt} stated in the Introduction. The topological input will be Proposition \ref{topo input}. The bridge between the topology over $\CC$ and counting $\fq$-points is provided by the theory of \'etale cohomology.

\begin{proof}[Proof of Corollary \ref{average pt}]

First of all, notice that $$\sum_{f\in\nn{\fq}}|X_f(\fq)|=|\mathrm{E}_{n,d}(\fq)|.$$ Therefore, it suffices to prove that $\frac{|\mathrm{E}_{n,d}(\fq)|}{|\nn(\fq)|}=q$. 

For $p$ a prime number, we consider $\mathrm{E}_{n,d}$ and $\nn$ over the algebraically closed field $\overline{\ff}_p$. For $q$ a power of $p$, there is a \emph{Frobenius automorphism} $\mathrm{Frob}_q:z\to z^q$ on $ \overline{\ff}_p$, where the fixed points are exactly elements in $\fq$. Similarly, the (geometric) Frobenius automorphism acts on $\mathrm{E}_{n,d}(\overline{\ff}_p)$ and $\nn(\overline{\ff}_p)$ where fixed point sets are the following
\begin{align*}
\{\text{Fixed points of }\mathrm{Frob}_q\curvearrowright \mathrm{E}_{n,d}({\overline{\mathbb{F}}_p})\} &= \mathrm{E}_{n,d}{(\mathbb{F}}_q)\\
\{\text{Fixed points of }\mathrm{Frob}_q\curvearrowright {\nn}(\overline{\mathbb{F}}_p)\} &= {\nn}{(\mathbb{F}}_q)
\end{align*}

The Grothendieck-Lefschetz fixed point formula in our case gives
\begin{align}
\label{GL nn}|\nn| &= \text{Number of fixed points of }\mathrm{Frob}_q\curvearrowright \nn(\overline{\mathbb{F}}_p)\\
\nonumber
& = \sum_{i\geq0} (-1)^i \mathrm{Trace}(\mathrm{Frob}_q: H^i_{c}({\nn}_{/\overline{\mathbb{F}}_p};\QQ_{l})) \\ 
\label{GL E}|\mathrm{E}_{n,d}(\fq)| &= \text{Number of fixed points of }\mathrm{Frob}_q\curvearrowright \mathrm{E}_{n,d}(\overline{\mathbb{F}}_p)\\
\nonumber
& = \sum_{i\geq0} (-1)^i \mathrm{Trace}(\mathrm{Frob}_q: H^i_{c}(E_{{n,d/{\overline{\mathbb{F}}_p}}};\QQ_{l})) \ \ 
\end{align}
where $H^i_c$ stands for compactly supported \'etale cohomology. As usual, the coefficients are taken in $\QQ_l$ for $l$ relatively prime to $q$.

\begin{lemma}
\label{primes}
For all but finitely many $p$, the map $\pi:\mathrm{E}_{n,d}\to\nn$ induces an isomorphism 
\begin{equation}
\label{isom}
{\pi^*_{\text{\'et}}}_{{/\overline{\mathbb{F}}_p}}: H^i_{\text{\'et}}({\nn}_{/\overline{\mathbb{F}}_p};\QQ_l)\xrightarrow{\cong} H^i_{\text{\'et}}({\mathrm{E}_{n,d}}_{{/{\overline{\mathbb{F}}_p}}};\QQ_l)
\end{equation}
which is equivariant with respect to the action of $\mathrm{Frob}_q$, for all $i$.
\end{lemma}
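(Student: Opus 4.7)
My plan is to combine the topological input from Proposition \ref{topo input} with three standard tools from \'etale cohomology: the Artin comparison theorem, smooth base change between algebraically closed fields of characteristic zero, and Deligne's finiteness and generic base change theorem for morphisms of finite type over $\mathrm{Spec}(\ZZ)$.

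First I would translate Proposition \ref{topo input} into an \'etale statement. Applying the Artin comparison theorem to both sides of the isomorphism $\pi^*: H^*(\Pn(\CC);\CC) \cong H^*(\mathrm{E}_{n,d}(\CC);\CC)$ yields
\[
\pi^*: H^i_{\text{\'et}}({\Pn}_{/\CC};\QQ_l) \xrightarrow{\cong} H^i_{\text{\'et}}({\mathrm{E}_{n,d}}_{/\CC};\QQ_l),
\]
valid whenever $n$ or $d$ is odd. I would then descend this isomorphism to $\overline{\QQ}$ by smooth base change along $\mathrm{Spec}(\CC) \to \mathrm{Spec}(\overline{\QQ})$, which applies because both varieties are of finite type over $\overline{\QQ}$.

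The heart of the argument is to spread the isomorphism from characteristic zero down to characteristic $p$. Let $U = \mathrm{Spec}(\ZZ[1/(ld)])$; over $U$ both structure morphisms $g: {\Pn}_U \to U$ and $h: {\mathrm{E}_{n,d}}_U \to U$ are smooth of finite type, the smoothness of $h$ using $p \nmid d$ to keep $y^d - f(x)$ separable in $y$. By Deligne's finiteness theorem (SGA 4.5, \emph{Th\'eor\`emes de finitude}), each sheaf $R^i g_* \QQ_l$ and $R^i h_* \QQ_l$ is constructible on $U$, and its formation commutes with arbitrary base change on an open dense subscheme. Because only finitely many $i$ give nonzero cohomology, I can intersect these open loci to obtain a single dense open $V \subset U$ on which both sheaves are lisse and compatible with base change for every $i$. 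Then for every closed point $p \in V$, the specialization maps
\[
H^i_{\text{\'et}}({X}_{/\overline{\mathbb{F}}_p};\QQ_l) \xleftarrow{\cong} (R^i g_* \QQ_l)_{\bar p} \xrightarrow{\cong} (R^i g_* \QQ_l)_{\bar\eta} \xrightarrow{\cong} H^i_{\text{\'et}}({X}_{/\overline{\QQ}};\QQ_l)
\]
are isomorphisms for $X = \Pn$ or $\mathrm{E}_{n,d}$, compatibly with $\pi^*$ by naturality. Combined with the isomorphism over $\overline{\QQ}$ from the previous step, this yields the desired isomorphism for every prime $p$ in the cofinite set $V$. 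The $\mathrm{Frob}_q$-equivariance is automatic, since $\pi$ is a morphism of $\ZZ$-schemes and hence commutes with the Frobenius endomorphism on both sides.

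The main obstacle lies in this last step: because $\Pn$ and $\mathrm{E}_{n,d}$ are affine and $\pi$ is not proper, one cannot directly invoke proper base change for $Rg_*$ or $R\pi_*$, and the ``all but finitely many primes" hypothesis in the statement arises precisely from Deligne's constructibility guaranteeing lisseness only on an open dense subscheme, with a finite set of ``bad" primes to discard. A cleaner alternative would work with $Rg_!$ instead (for which base change holds without any genericness), use constructibility of $R^i g_! \QQ_l$ to obtain locally constant stalks away from finitely many primes, and transport the conclusion back to ordinary cohomology via Poincar\'e duality, valid because $\Pn$ and ${\mathrm{E}_{n,d}}_U$ are smooth over $U$.
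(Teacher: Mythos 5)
Your proposal is correct and follows essentially the same route as the paper: Artin comparison over $\CC$, then constructibility of the derived pushforward to $\mathrm{Spec}\,\ZZ$ together with (generic) base change to transfer the isomorphism to $H^i_{\text{\'et}}$ over $\overline{\mathbb{F}}_p$ for all but finitely many $p$, with Frobenius equivariance automatic because $\pi$ is defined over $\ZZ$. The paper cites base change for compactly supported cohomology plus Poincar\'e duality (your ``cleaner alternative'') rather than Deligne's generic base change for $Rg_*$, but this is the same idea packaged slightly differently.
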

\begin{proof}[Proof of the claim]
Since $\pi$ commutes with $\mathrm{Frob}_q$, the induced map $\pi^*_{\text{\'et}/{\overline{\mathbb{F}}_p}}$ must be equivariant with respect to $\mathrm{Frob}_q$. 

The lemma will follow from the commutative diagram
\begin{equation*}
\begin{tikzpicture}
\diagram (m)
{ H^i_\text{sing}({\nn}_{/\CC};\QQ_l)& H^i_\text{sing}({\mathrm{E}_{n,d}}_{/\CC};\QQ_l) \\
H^i_{\text{\'et}}({{\nn}_{/\CC}};\QQ_l) &H^i_{\text{\'et}}({{\mathrm{E}_{n,d}}_{/\CC}};\QQ_l) \\  
H^i_{\text{\'et}}({\nn}_{/\overline{\mathbb{F}}_p};\QQ_l)& H^i_{\text{\'et}}({{\mathrm{E}_{n,d}}_{/{\overline{\mathbb{F}}_p}}};\QQ_l) \\};
\path [->] (m-1-1) edge node [above] {$\pi^*$} (m-1-2)
           (m-1-2) edge node [right]  {$\cong$}(m-2-2)
           (m-1-1) edge node [left]  {$\cong$}(m-2-1)
           (m-2-1) edge node [above]{$\pi^*_{\text{\'et}/\CC}$}(m-2-2)
           (m-2-1) edge node [left]  {$\cong$}(m-3-1)
           (m-3-1) edge node [above]{$\pi^*_{\text{\'et}/{\overline{\mathbb{F}}_p} }$} (m-3-2)
           (m-2-2) edge node [right]{$\cong$} (m-3-2);

\end{tikzpicture}
\end{equation*}
where the top row is singular cohomology, the last two rows are \'etale cohomology of varieties over $\CC$ and ${\overline{\mathbb{F}}_p}$, respectively. The vertical maps are all isomorphisms. The horizontal maps are induced by $\pi$ in various categories. 
The commuting square on the top is given by the Artin's Comparison Theorem. The  commuting square on the bottom follows from base change for compactly supported \'etale cohomology and constructibility of the derived pushforward, which are respectively Theorems 5.4 and 6.2 in SGA $4\frac{1}{2}$ \cite{SGA}, Arcata, Chapter IV. We need to exclude a finite set of primes because the derived pushforward of the constant sheaf on $E_{n,d}$ to $\mathrm{Spec}\ZZ$ is constructible, but not necessarily locally constant. 
\end{proof}

Finally, we conclude the proof of Corollary \ref{average pt} in the following steps. The Poincar\'e duality for the \'etale cohomology can be found as Theorem 3.1 in SGA $4\frac{1}{2}$ \cite{SGA}, Arcata, Chapter VI.

\begin{align*}
|\nn(\fq)| & = \sum_{i\geq0} (-1)^i \mathrm{Trace}(\mathrm{Frob}_q: H^i_{c}({\nn}_{/\overline{\mathbb{F}}_p};\QQ_{l})) \ \ &(\ref{GL nn}) \\
&= q^{dim (\nn)}\sum_{i\geq0} (-1)^i \mathrm{Trace}(\mathrm{Frob}_q: H^i_{\text{\'et}}({\nn}_{/\overline{\mathbb{F}}_p};\QQ_{l})^\vee)&\text{Poincar\'e duality}\\
&=q^{n}\sum_{i\geq0} (-1)^i \mathrm{Trace}(\mathrm{Frob}_q: H^i_{\text{\'et}}({\nn}_{/\overline{\mathbb{F}}_p};\QQ_{l})^\vee)
\end{align*}

\begin{align*}
|\mathrm{E}_{n,d}(\fq)| &= \sum_{i\geq0} (-1)^i \mathrm{Trace}(\mathrm{Frob}_q: H^i_{c}({\mathrm{E}_{n,d}}_{{/{\overline{\mathbb{F}}_p}}};\QQ_{l}))&(\ref{GL E}) \\
&= q^{dim (\mathrm{E}_{n,d})}\sum_{i\geq0} (-1)^i \mathrm{Trace}(\mathrm{Frob}_q: H^i_{\text{\'et}}({\mathrm{E}_{n,d}}_{{/{\overline{\mathbb{F}}_p}}};\QQ_{l})^\vee)&\text{Poincar\'e duality}\\
&=q^{n+1}\sum_{i\geq0} (-1)^i \mathrm{Trace}(\mathrm{Frob}_q: H^i_{\text{\'et}}({\mathrm{E}_{n,d}}_{{/{\overline{\mathbb{F}}_p}}};\QQ_{l})^\vee)& \text{(\ref{isom}) is an isomorphism}\\
\end{align*}
Corollary \ref{average pt} follows by taking the ratio. Notice it is classically known that $|\nn(\fq)|=q^n-q^{n-1}\ne0$.
\end{proof}

\subsection{Proof of Lemma \ref{monodromy}}
\label{appendix}

\begin{proof}
The projection $p:X_f\to\CC$ is a branch cover. If we consider $D_n:= \CC\setminus\{\text{the $n$ branch points}\}$ and  $Y_f:= X_f\setminus \{\text{the $n$ ramification points}\}$, then $p|_{Y_f}: Y_f\to D_n$ is a (unramified) covering. The deck group is still $\ZZ/d\ZZ$, generated by $T$. Moreover, the deck transformation is given by composing the maps $ \pi_1(D_n)\xrightarrow\omega\langle t\rangle\cong\ZZ\to\ZZ/d\ZZ$, where $\omega$ is the total winding number defined in \ref{winding_def}. 

Recall $\CC_\zeta$ abbreviates
$\CC[t,t^{-1}]/(t-\zeta)$ as a $\CC[t,t^{-1}]$-module (see the first paragraph of Section \ref{specialized Burau section}). If $\zeta^d=1$, then $\CC_\zeta$ will be a representation of $\ZZ/d\ZZ$ where the generator $t\in\ZZ/d\ZZ$ acts by $\zeta$. The group ring $\CC[\ZZ/d\ZZ]$ decomposes into a direct sum of 1-dimensional irreducible representations $\CC_\zeta$
$$\CC[\ZZ/d\ZZ] = \bigoplus_{\zeta:\zeta^d=1}\CC_\zeta.$$

Therefore, we have 
\begin{align}
\nonumber H_1(Y_f;\CC)&=H_1(D_n;\CC[\ZZ/d\ZZ]) &\text{Shapiro's lemma}\\
\nonumber&=H_1(F_n;\CC[\ZZ/d\ZZ]) &\text{$D_n$ is a $K(F_n,1)$}\\
\nonumber&=H_1(F_n;\bigoplus_{\zeta:\zeta^d=1}\CC_\zeta) &\ \\
\label{decomp}&=\bigoplus_{\zeta:\zeta^d=1}H_1(F_n;\CC_\zeta) &\ 
\end{align}
Moreover, $T$ acts on $H_1(Y_f;\CC)$ preserving each summand in (\ref{decomp}) and acts on $H_1(F_n;\CC_\zeta)$ by $\zeta$. Hence, $H_1(Y_f;\CC)_\zeta = H_1(F_n;\CC_\zeta)$.

$X_f$ is obtained from $Y_f$ by attaching $n$ disks to cover the punctures. By the Mayer-Vietoris sequence, we see that the inclusion $Y_f\hookrightarrow X_f$ induces a surjective map on $H_1$ where the kernel is $H_1(Y_f;\CC)^{T}$, the $T$-invariant subspace of $H_1(Y_f;\CC)$. Another way to see this is to observe $H_1(Y_f;\CC)^{T}$ are spanned by cycles represented by small loops around the punctures on $Y_f$ because near the punctures, the deck transformation locally looks like a rotation by $2\pi/d$. Those loops are killed in $H_1$ when we cover the punctures by disks. In conclusion, we have 
\begin{equation}
\label{XY}
H_1(X_f;\CC)=H_1(Y_f;\CC)/H_1(Y_f;\CC)^{T}.
\end{equation}
As a result, $H_1(X_f;\CC)$ has no $T$-invariant vector, which is equivalent to saying $H_1(X_f;\CC)_\zeta=0$ for $\zeta=1$. 

When $\zeta\ne1$, we have $H_1(X_f;\CC)_\zeta\cong H_1(Y_f;\CC)_\zeta \cong H_1(F_n;\CC_\zeta)$ by (\ref{decomp}) and (\ref{XY}). We will show that in fact $H_1(F_n;\CC_\zeta)\cong V_n(\zeta)$ as $B_n$-modules. Notice 
\begin{itemize}
\item $V_n(\zeta) := V_n\underset{\RR}\otimes\CC_\zeta= H_1(F_n;\RR)\underset{\RR}\otimes\CC_\zeta$
\item $H_1(F_n;\CC_\zeta) := H_1(F_n;\RR\underset{\RR}\otimes\CC_\zeta)$
\end{itemize}
It follows from the universal coefficient theorem that $V_n(\zeta)=H_1(F_n;\CC_\zeta)$ because
$$Tor(H_0(F_n;\RR), \CC_\zeta) = Tor(\RR/(1-t),\CC_\zeta)=0$$ provided that $\zeta\ne1$. 

To sum up, we have proved  
\[
H_1(X_f;\CC)_\zeta = 
  \begin{cases}
   0 \ \ \ &  \text{if }\zeta=1 \\
   V_n(\zeta) \ \ \ &  \text{if }\zeta\ne1 
  \end{cases}
\]
The proof of Lemma \ref{monodromy} is completed by taking the dual $H^1(X_f;\CC) = H_1(X_f;\CC)^\vee$.
\end{proof}

\subsection{When both $d$ and $n$ are even}
\label{even}

When both $n$ and $d$ are even, our proof does not go through because the bundle projection $\pi:\mathrm{E}_{n,d}(\CC)\to\cn$ no longer induces isomorphisms on all the cohomology group in this case. In fact, by Proposition \ref{vanishing}, when both $n$ and $d$ are even we have
\begin{equation}
\label{E}
H^k(\mathrm{E}_{n,d};\CC)\cong
  \begin{cases}
   \CC \ \ \ &  k=0\\
   \CC \ \ \ &  k=1\\
   0 \ \ \ &  2<k<n-1 \\
   \CC \ \ \ & k=n-1\\
   \CC \ \ \ &  k=n\\
  \end{cases}
\end{equation}
Hence, $\mathrm{E}_{n,d}(\CC)$ and $\cn$ have non-isomorphic cohomology groups. 

After an early draft of this paper was distributed, Will Sawin, Frank Calegari, and Bjorn Poonen [personal communications] independently found proofs showing that when both $n$ and $d$ are even, 
\begin{equation}
\label{even formula}
\frac{1}{|\PP|}\sum_{f\in\PP} |X_f(\fq)| = q-q^{-(n-2)}.
\end{equation}
The cohomology computation (\ref{E}) provides a topological reason why an extra term of $-q^{-(n-2)}$ would appear in the expected number (\ref{even formula}) when both $n$ and $d$ are even. Since $|\PP|=q^n-q^{n-1}$, the equality (\ref{even formula}) is equivalent to 
$$|\mathrm{E}_{n,d}(\fq)|=q^{n+1}(1-q^{-1} -q^{-(n-1)}+q^{-n})$$
On the other hand, the Grothendieck-Lefschetz formula gives 
$$|\mathrm{E}_{n,d}(\fq)| = q^{n+1}\sum_{i\geq0} (-1)^i \mathrm{Trace}(\mathrm{Frob}_q: H^i_\text{\'et}({\mathrm{E}_{n,d}}_{{/{\overline{\mathbb{F}}_p}}};\QQ_{l})^\vee).$$
The  extra term of $-q^{-(n-2)}$ in the expected number (\ref{even formula}) comes from the contribution by $H^k(\mathrm{E}_{n,d};\CC)$ near the top dimensions $k=n-1,n$.

\section{The higher moments of the statistics}
\label{higher moments}

Fixing $d$ and $n$, if we view 
$|X_f(\fq)|$
as a random variable on $\nn(\fq)$, then Corollary \ref{average pt} computes the expected value of this random variable, when $d$ or $n$ is odd with a finite set of primes excluded. The next question is to determine the $m$-th moment of the random variable.

\begin{question}
\label{aquestion}
For each fixed $n$ and $m>1$,  what is
\begin{equation}
\label{moment}
\frac{1}{|\nn(\fq)|}\sum_{f\in\nn(\fq)}|X_f(\fq)|^m?
\end{equation}
\end{question}

On the topological side, Question \ref{aquestion} suggests us to consider the bundle over $\cn$ where the fiber over $f\in\cn$ is the $m$-th Cartesian product of curves $X_f\times \cdots\times X_f$, and to consider the following question:
\begin{question}
\label{tquestion}
Suppose $V_n^{\otimes m}$ is the $m$-th tensor power of the reduced Burau module, tensoring over $\CC$. When $m>1$, what is $H_k(B_n;V_n^{\otimes m})$ for each $k$ and each $n$?
\end{question}

Some partial answers to two questions are known. The asymptotic answer to Question \ref{aquestion} as $n\to \infty$ is worked out by Cheong-Wood-Zaman \cite{CWZ} and by Xiong \cite{Xiong}. In particular, each $m$-th moment (\ref{moment}) converges to a limit as $n\to\infty$. On the topological side, this leads one to wonder if $H_k(B_n;V_n^{\otimes m})$ stabilizes when $n\to \infty$. Indeed, it follows from Theorem 6.13 in \cite{Wahl} that for each fixed $k$ and $m$, the homology group $H_k(B_n;V_n^{\otimes m})$ is independent of $n$ when $n\geq 2k+m+2$.


\begin{thebibliography}{15}
\bibliographystyle{alpha}

\bibitem[Ar]{Arnol'd}
V. Arnol'd. {On some topological invariants of algebraic functions}, \emph{Trudy Moscov. Mat. Obshch.} 21 (1970), 27-46 (Russian), English transl. in Trans. Moscow Math. Soc. 21 (1970), 30-52.

\bibitem[Bi]{Birman}
J. Birman. \emph{Braids, Links, and Mapping Class Groups},
Princeton University Press, 1974.

\bibitem[BDFL]{BDFL}
A. Bucur, C. David, B. Feigon and M. Lal\'in. {Statistics for traces of cyclic trigonal curves over finite fields}. \emph{Int. Math. Res. Not. IMRN,} (5):932-967, 2010.

\bibitem[CMS]{CMS}
F. Callegaro, D. Moroni and M. Salvetti. {Cohomology of Artin groups of type $\tilde{A_n}$, $B_n$ and applications}, \emph{Geometry and Topology Monographs}, Vol. 13, (2008): 85-104.


\bibitem[CWZ]{CWZ}
G. Cheong, M. Wood, and A. Zaman. {The distribution of points on superelliptic curves over finite fields}. \emph{Preprint} (arXiv:1210.0456).


\bibitem[Ch]{Chow}
W.-L. Chow. {On the algebraical braid group}, \emph{Ann. of Math.} 49, No 3 (1948): 654-658.

\bibitem[CEF]{CEF}
T. Church, J. Ellenberg, and B. Farb. {Representation stability in cohomology and asymptotics for families of varieties over finite fields}, \emph{Contemporary Math.}, 620 (2014): 1-54.

\bibitem[CF]{CF}
T. Church and B. Farb. {Representation theory and homological stability}, \emph{Advances in Math.}, Vol 245 (2013): 250-314. 


\bibitem[Co]{Cohen}
F. Cohen. {The homology of $\mathcal{C}_{n+1}$-spaces, $n\geq0$}, in \emph{The homology of iterated loop spaces}, Lecture Notes in Mathematics 533, Springer, Berlin (1976) 207–351. 

\bibitem[CP]{CP}
J. Crisp and L. Paris. {Artin groups of type $B$ and $D$}. \emph{Preprint} (arXiv:math/0210438).


\bibitem[Da]{David}
C. David, \emph{Curves and zeta functions over finite fields}, notes for the Arizona Winter School 2014: Arithmetic Statistics.


\bibitem[DPS]{CPS}
C. De Concini, C. Procesi, M. Salvetti. {Arithmetic properties of the cohomology of braid groups}, \emph{Topology} 40 (2001): 739-751.


\bibitem[De]{SGA}
P. Deligne. \emph{Cohomologie \'Etale: S\'eminaire de G\'eom\'etrie Alg\'ebrique du Bois-Marie SGA $4\frac{1}{2}$} [Arcata] (Lecture Notes in Mathematics 569).


\bibitem[FM]{FM}
B. Farb and D. Margalit. \emph{A Primer on Mapping Class Groups}.  Princeton University Press, Princeton, NJ, 2012.

\bibitem[FN]{FN}
 E. Fadell and L. Neuwirth, Configuration spaces, \emph{Math. Scand.} \textbf{10 Fasc. I} (1962), 111-118.

\bibitem[FW1]{FW1}
B. Farb and J. Wolfson, Topology and arithmetic of resultants, I: spaces of
rational maps, \emph{Preprint} (arXiv:1506.02713).


\bibitem[FW2]{FW2}
B. Farb and J. Wolfson, Topology and arithmetic of resultants, II: the resultant =1 hypersurface (with an appendix by C. Cazanave), \emph{Preprint} (arXiv:1507.01283 ).

\bibitem[Go]{G}
R. Godement, \emph{Topologie Alg\'ebrique et Th\'eorie des Faisceaux}, Hermann, Paris, 1958.


\bibitem[Mc]{McMullen}
C. McMullen, Braid groups and Hodge theory. \emph{Math. Ann.}, 355(3):893-946, 2013.


\bibitem[KT]{KT}
C. Kassel and V. Turaev. \emph{Braid Groups}. Graduate Texts in Mathematics, Vol. 247, New York, NY.: Springer, 2008.


\bibitem[KR]{KR}
P. Kurlberg and Z. Rudnick. {The fluctuations in the number of points on a hyperelliptic curve over a finite field}. \emph{J. Number Theory}, 129(3):580-587, 2009.



\bibitem[Va]{V}
V. Vassiliev. {Braid group cohomologies and algorithm complexity}, \emph{Functional Analysis and its Applications}, Vol. 22, Issue 3 (1988): 182-190.

\bibitem[WR]{Wahl}
N. Wahl and O. Randal-Williams, {Homological stability for automorphism groups}. \emph{Preprint}{ (arXiv:1409.3541)}.


\bibitem[Xi]{Xiong}
M. Xiong. {The fluctuations in the number of points on a family of curves over a finite field}. \emph{J. Th\'eor. Nombres Bordeaux}, 22(3):755-769, 2010.


\end{thebibliography}

\end{document}